\definecolor{webblue}{rgb}{0,.5,0}
\definecolor{webred}{rgb}{0,.5,0}
\definecolor{webbrown}{rgb}{.6,0,0}
\newtheorem{thm}{Theorem}[section]
\newtheorem{lem}[thm]{Lemma}
\newtheorem{cor}[thm]{Corollary}
\newtheorem{prop}[thm]{Proposition}
\newtheorem{conj}[thm]{Conjecture}
\newtheorem{cl}{Claim}
\theoremstyle{definition}
\newtheorem{ex}[thm]{Example}
\newcommand{\D}{d}
\renewcommand{\d}{\delta}
\newtheorem{rem}[thm]{Remark}
\numberwithin{equation}{section}
\title{Monotonicity and log-behavior of some functions related to the Euler Gamma function
\thanks{Supported partially by the National Natural Science Foundation of China (No.
11201191) and PAPD of Jiangsu Higher Education Institutions.
\newline\hspace*{5mm}
   {\it Email addresses:} bxzhu@jsnu.edu.cn (B.-X. Zhu)}}
\author{Bao-Xuan Zhu}
\date{\footnotesize School of Mathematics and Statistics,
         Jiangsu Normal University,
         Xuzhou 221116, PR China}
\begin{document}

\maketitle

\begin{abstract}
The aim of this paper is to develop analytic techniques to deal with
certain monotonicity of combinatorial sequences. On the one hand, a
criterion for the monotonicity of the function $\sqrt[x]{f(x)}$ is
given, which is a continuous analog for one result of Wang and Zhu.
On the other hand, the log-behavior of the functions
$\theta(x)=\sqrt[x]{2 \zeta(x)\Gamma(x+1)}$ and
$F(x)=\sqrt[x]{\frac{\Gamma(ax+b+1)}{\Gamma(c x+d+1)\Gamma(e
x+f+1)}}$ is considered, where $\zeta(x)$ and $\Gamma(x)$ are the
Riemann zeta function and the Euler Gamma function, respectively. As
consequences, the strict log-concavities of the function $\theta(x)$
(a conjecture of Chen {\it et al.}) and $\{\sqrt[n]{z_n}\}$ for some
combinatorial sequences (including the Bernoulli numbers, the
Tangent numbers, the Catalan numbers, the Fuss-Catalan numbers, the
Binomial coefficients $\binom{2n}{n}$, $\binom{3n}{n}$,
$\binom{4n}{n}$, $\binom{5n}{n}$, $\binom{5n}{2n}$ ) are
demonstrated. In particular, this contains some results of Chen {\it
et al.}, Luca and St\u{a}nic\u{a}.

Finally, by researching logarithmically complete monotonicity of
some functions, the infinite log-monotonicity of the sequence
$\{\frac{(n_{0}+ia)!}{(k_0+ib)!(\overline{k_0}+i\overline{b})!}\}_{i\geq0}$
is proved. This generalizes two results of Chen {\it et al.} that
both the Catalan numbers $\frac{1}{n+1}\binom{2n}{n}$ and central
binomial coefficients $\binom{2n}{n}$ are infinitely log-monotonic
and strengths one result of Su and Wang that $\binom{dn}{\delta n}$
is log-convex in $n$ for positive integers $d>\delta$. In addition,
the asymptotically infinite log-monotonicity of derangement numbers
is showed. In order to research the stronger properties of the above
functions $\theta(x)$ and $F(x)$, the logarithmically complete
monotonicity of functions $1/\sqrt[x]{a \zeta(x+b)\Gamma(x+c)}$ and
$\sqrt[x]{\rho\prod_{i=1}^n\frac{\Gamma(x+a_i)}{\Gamma(x+b_i)}}$ is
also obtained, which generalizes the results of Lee and
Tepedelenlio\v{g}lu, Qi and Li.
\bigskip\\
{\sl MSC:}\quad 05A20; 11B68
\\
{\sl Keywords:}\quad Monotonicity; Log-convexity; Log-concavity;
Completely monotonic functions; Infinite log-monotonicity
\end{abstract}
\section{Introduction}
Let $\{z_n\}_{n\geq0}$ be a sequence of positive numbers. It is
called {\it log-concave} (resp. {\it log-convex}) if
$z_{n-1}z_{n+1}\le z_n^2$ (resp. $z_{n-1}z_{n+1}\ge z_n^2$) for all
$n\ge 1$. Clearly, the sequence $\{z_n\}_{n\ge 0}$ is log-concave
(resp. log-convex) if and only if the sequence
$\{z_{n+1}/z_n\}_{n\ge 0}$ is decreasing (resp. increasing).
Generally speaking, a sequence will have good behavior (e.g.,
distribution properties, bounds by inequalities) if it is
log-concave or log-convex. In addition, sequences with log-behaviour
arise often in combinatorics, algebra, geometry, analysis,
probability and statistics and have been extensively investigated
(see \cite{Bre94,LW07, Sta89,WY07,Zhu12} for instance).

Motivated by a series of conjectures of Sun~\cite{Sun-conj} about
monotonicity of sequences of the forms $\{\sqrt[n]{z_n}\}$ and
$\{\sqrt[n+1]{z_{n+1}}/\sqrt[n]{z_n}\}$, where $\{z_n\}_{n\ge 0}$ is
a familiar number-theoretic or combinatorial sequence, e.g., the
Bernoulli numbers, the Fibonacci numbers, the derangements numbers,
the Tangent numbers, the Euler numbers, the Schr\"oder numbers, the
Motzkin numbers, the Domb numbers, and so on. These conjectures have
recently been investigated by some researchers (see Chen {\it et
al.}~\cite{CGW,CGW1}, Hou {\it et al.}~\cite{HSW12}, Luca and
St\u{a}nic\u{a}~\cite{LS12}, Wang and Zhu~\cite{WZ13}). The main aim
of this paper is to develop  some analytic techniques to deal with
monotonicity of $\{\sqrt[n]{z_n}\}$ and
$\{\sqrt[n+1]{z_{n+1}}/\sqrt[n]{z_n}\}$ (Note that the monotonicity
of $\{\sqrt[n+1]{z_{n+1}}/\sqrt[n]{z_n}\}$ equals to the
log-behavior of $\{\sqrt[n]{z_n}\}$).

 Recently, Wang and
Zhu~\cite{WZ13} observed sufficient conditions that the
log-behaviour of $\{z_n\}_{n\ge 0}$ implies the monotonicity that of
$\{\sqrt[n]{z_n}\}_{n\ge 1}$. For example, for a positive log-convex
sequence $\{z_n\}_{n\ge 0}$, if $z_0\le 1$, then the sequence
$\{\sqrt[n]{z_n}\}_{n\ge 1}$ is increasing. Using the analytic
approach of Chen {\it et al.} \cite{CGW}, the following continuous
analog can be proved, whose proof is arranged in Section 2.
\begin{thm}\label{thm+inc}
Let $N$ be a positive number. If $f(x)$ is a positive increasing
log-convex function for $x\geq N$ and $f(N)\leq 1$, then
$\sqrt[x]{f(x)}$ is strictly increasing on $(N,\infty)$.
\end{thm}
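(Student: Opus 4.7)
Since $\sqrt[x]{f(x)} = \exp\!\bigl(\tfrac{1}{x}\ln f(x)\bigr)$, my plan is to prove instead that $g(x) := \tfrac{1}{x}\ln f(x)$ is strictly increasing on $(N,\infty)$. Differentiating gives
\[
g'(x) = \frac{1}{x^{2}}\bigl(x(\ln f)'(x) - \ln f(x)\bigr) = \frac{\phi(x)}{x^{2}},
\]
so the task reduces to showing that $\phi(x) := x(\ln f)'(x) - \ln f(x)$ is strictly positive for $x>N$.

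The key observation I would use next is that $\phi$ has a particularly clean derivative:
\[
\phi'(x) = (\ln f)'(x) + x(\ln f)''(x) - (\ln f)'(x) = x(\ln f)''(x) \geq 0,
\]
because the log-convexity of $f$ means $(\ln f)'' \geq 0$ (and $x>0$). Thus $\phi$ is non-decreasing on $[N,\infty)$, and it suffices to control it at the endpoint. At $x=N$, the value $\phi(N) = N(\ln f)'(N) - \ln f(N)$ is a sum of two non-negative terms: the first because $f$ is positive and increasing (so $(\ln f)'(N)\geq 0$), and the second because $f(N)\leq 1$ yields $-\ln f(N)\geq 0$. Hence $\phi(N)\geq 0$, which already delivers the weak bound $\phi(x)\geq 0$ for all $x\geq N$.

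The one step I expect to require genuine care is the promotion of this to a \emph{strict} inequality when $x>N$. If $\phi(N)>0$ we are done, so the only obstruction is the boundary case $\phi(N)=0$, which forces both $f(N)=1$ and $f'(N)=0$. To handle it I would use the integral representation
\[
\phi(x) = \phi(N) + \int_{N}^{x} t\,(\ln f)''(t)\,dt = \int_{N}^{x} t\,(\ln f)''(t)\,dt;
\]
if this integral vanished on some interval $[N,y]$ with $y>N$, then $\ln f$ would be affine on $[N,y]$ with zero value and zero derivative at $N$, i.e.\ $f\equiv 1$ on $[N,y]$, contradicting the strict monotonicity of $f$. This also explains why ``increasing'' in the hypothesis must be read as \emph{strictly} increasing: under the weak reading, $f\equiv 1$ would satisfy all the assumptions yet make the conclusion false.

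Granting enough regularity to form $(\ln f)''$ (otherwise one works with the right derivatives of the convex function $\ln f$, replacing the pointwise identity $\phi'=x(\ln f)''$ by the corresponding monotonicity statement for $\phi$), this essentially completes the argument; the reduction to $\phi$, the derivative computation, and the boundary check at $x=N$ are the three steps I would carry out in order, with the boundary strictness being the only delicate point.
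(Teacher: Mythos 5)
Your proof is correct and follows essentially the same route as the paper: both reduce the claim to the positivity of $\phi(x)=x(\log f)'(x)-\log f(x)$ and exploit the monotonicity of $(\log f)'$ furnished by log-convexity, the paper doing so via the mean value theorem applied to $\log f$ on $[N,x]$ and you via integrating $\phi'$ from the endpoint. Your closing remark is apt: both arguments (including the strict inequality in the paper's chain $\frac{\log f(x)-\log f(N)}{x}<\frac{\log f(x)-\log f(N)}{x-N}$, which needs $f(x)>f(N)$) implicitly read ``increasing'' as strictly increasing, since $f\equiv 1$ satisfies the weak hypotheses yet makes $\sqrt[x]{f(x)}$ constant.
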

\begin{rem}
Theorem~\ref{thm+inc} can be applied to the monotonicity of
$\{\sqrt[n]{z_n}\}_{n\ge 1}$ for some combinatorial sequences
$\{z_n\}_{n\ge 0}$. Some further examples and applications related
to Theorem~\ref{thm+inc} can be found in \cite{CGW}.
\end{rem}

Thus, one may ask whether there are some analytic techniques to deal
with the log-behavior of $\{\sqrt[n]{z_n}\}_{n\geq1}$. This is
another motivation of this paper. In particular, the following
conjecture of Chen {\it et al.}~\cite{CGW} is still open.
\begin{ex}
Recall that the classical Bernoulli numbers are defined by
$$B_0=1,\quad \sum_{k=0}^n\binom{n+1}{k}B_k=0,\qquad n=1,2,\ldots.$$
It is well known that $B_{2n+1}=0, (-1)^{n-1}B_{2n}>0$ for $n\ge 1$
and
$$(-1)^{n-1}B_{2n}=\frac{2(2n)!\zeta(2n)}{(2\pi)^{2n}},$$
see \cite[(6.89)]{GKP94} for instance. In order to show that
$\{\sqrt[n]{(-1)^{n-1}B_{2n}}\}$ is increasing, Chen et al
\cite{CGW} introduced the function $\theta(x)=\sqrt[x]{2
\zeta(x)\Gamma(x+1)},$ where
$$\zeta(x)=\sum_{n\geq1}\frac{1}{n^x}$$ is the Riemann zeta function and $\Gamma(x)$ is the Euler
Gamma function. Thus
$$\sqrt[n]{(-1)^{n-1}B_{2n}}={\theta^2(2n)}/{4\pi^{2}}.$$ They proved
that $\theta(x)$ is increasing on $(6,\infty)$. In addition, in
order to get the log-concavity of
$\{\sqrt[n]{(-1)^{n-1}B_{2n}}\}_{n\ge 1}$, they further conjectured.
\begin{conj}\emph{\cite{CGW}}\label{conj}
The function $\theta(x)=\sqrt[x]{2 \zeta(x)\Gamma(x+1)}$ is
log-concave on $(6,\infty)$.
\end{conj}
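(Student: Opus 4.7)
The plan is to reduce the log-concavity of $\theta(x)$ to an inequality on a single auxiliary function. Set $\phi(x) := \log 2 + \log\zeta(x) + \log\Gamma(x+1)$, so that $\log\theta(x) = \phi(x)/x$. Two differentiations give
$$
(\log\theta)''(x) = \frac{A(x)}{x^3}, \qquad A(x) := x^2\phi''(x) - 2x\phi'(x) + 2\phi(x),
$$
so it suffices to establish $A(x) \leq 0$ for $x > 6$.

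The key algebraic identity is $A'(x) = x^2\phi'''(x)$, so once one knows $\phi'''(x) < 0$ on $(6,\infty)$, the function $A$ is strictly decreasing there and the task reduces to checking $A(6) \leq 0$. Negativity of $\phi'''$ follows from two classical expansions: using the prime-indexed series $\log\zeta(x) = \sum_{p}\sum_{k\geq 1} p^{-kx}/k$ valid for $x>1$, termwise differentiation yields $(\log\zeta)'''(x) = -\sum_{p,k} k^2(\log p)^3 p^{-kx} < 0$; and $(\log\Gamma(x+1))''' = \psi''(x+1) = -2\sum_{n\geq 0}(x+1+n)^{-3} < 0$, where $\psi$ denotes the digamma function. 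Adding these confirms $\phi'''(x) < 0$ for all $x > 0$, and in particular on $(6,\infty)$.

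For the boundary verification at $x = 6$, the exact evaluations $\zeta(6) = \pi^6/945$, $\Gamma(7) = 720$, $\psi(7) = -\gamma + \sum_{k=1}^{6} k^{-1}$, and $\psi'(7) = \zeta(2) - \sum_{k=1}^{6} k^{-2}$ handle the gamma-factor contributions to $\phi(6)$, $\phi'(6)$, and $\phi''(6)$ exactly. The remaining inputs $\zeta'(6)$ and $\zeta''(6)$ come from truncating the Dirichlet series $\zeta'(s) = -\sum_{n\geq 2}\log n/n^s$ and $\zeta''(s) = \sum_{n\geq 2}(\log n)^2/n^s$ together with rigorous tail bounds. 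A quick numerical check gives $A(6) \approx -1.87$, so the required inequality $A(6) \leq 0$ is comfortably satisfied.

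The main obstacle is making this boundary estimate fully rigorous: one must produce effective upper and lower bounds on $\zeta'(6)$ and $\zeta''(6)$ strong enough to conclude $A(6) < 0$ without appealing to mere numerical evidence. Given the comfortable margin suggested above, truncating at moderate length with an integral tail bound of the form $\int_N^\infty (\log t)^j t^{-6}\,dt$ for $j = 1, 2$ should be more than enough. An alternative route, as hinted in the paper's abstract, is to prove the stronger assertion that $1/\theta(x)$ is logarithmically completely monotonic on $(6,\infty)$; the case $k=2$ of that property is precisely the desired log-concavity, at the cost of requiring a unified integral representation of $\phi(x)/x$ that combines the Binet formula for $\log\Gamma$ with a suitable representation of $\log\zeta$ and the identity $1/x = \int_0^\infty e^{-xt}\,dt$.
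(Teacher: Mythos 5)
Your proposal is correct, and it takes a genuinely different — and in fact stronger — route than the paper. The paper attacks $(\log\theta)''$ directly: it splits it into the three summands $\frac{2\log 2}{x^3}+\bigl(\frac{\log\zeta(x)}{x}\bigr)''+\bigl(\frac{\log\Gamma(x+1)}{x}\bigr)''$, bounds $x^3\bigl(\frac{\log\zeta(x)}{x}\bigr)''<2.67$ using the estimate $\zeta(x)-1\le 3/2^x$, and bounds $x^3\bigl(\frac{\log\Gamma(x+1)}{x}\bigr)''\le -x+\log(x+1)-1+\log 2\pi+\frac{1}{2(x+1)}<-4.1$ using Alzer's completely monotonic corrections to Stirling's formula. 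The resulting margin $2\log 2+2.67-4.1<0$ is thin, which is exactly why the paper only obtains log-concavity on $(7.1,\infty)$ and describes the conjecture as ``almost'' confirmed (this suffices for the Bernoulli application, where only $\theta(2n)$ with $2n\ge 8$ is needed). Your observation that $A'(x)=x^2\phi'''(x)$ with $\phi'''<0$ (from the prime series for $\log\zeta$ and from $\psi''<0$) collapses the problem to the single inequality $A(6)\le 0$, and since $A(6)\approx -1.87$ you would prove the conjecture on all of $(6,\infty)$, strictly improving the paper's Theorem~3.3. The only thing you owe is the rigorous certification of $A(6)<0$: you correctly isolate $\zeta'(6)$ and $\zeta''(6)$ as the only non-closed-form inputs, and with a margin near $-1.87$ a short truncation of $\sum_{n\ge 2}(\log n)^j n^{-6}$ plus the integral tail bound $\int_N^\infty(\log t)^j t^{-6}\,dt$ settles it; this is routine but should be written out to make the argument complete. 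In short, the paper buys uniform inequalities valid for all large $x$ at the cost of a worse threshold, while your monotonicity reduction buys the sharp interval at the cost of one careful pointwise computation.
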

\end{ex}
Using some inequalities of the Riemann zeta function and the Euler
Gamma function, in Section 3, this conjecture will almost be
confirmed, see Theorem~\ref{thm+con}. As applications, the results
of Luca and St\u{a}nic\u{a}~\cite{LS12} on strict log-concavities of
$\{\sqrt[n]{(-1)^{n-1}B_{2n}}\}_{n\ge 1}$ and
$\{\sqrt[n]{T(n)}\}_{n\ge 1}$ can be verified, where $T(n)$ are the
Tangent numbers.

In addition, motivated by the strict log-concavities of
$\sqrt[n]{\binom{2n}{n}}$ and $\sqrt[n]{\frac{1}{2n+1}
\binom{2n}{n}}$ (Chen {\it et al.} \cite{CGW1}), the log-behavior of
the function
$$F(x)=\sqrt[x]{\frac{\Gamma(ax+b+1)}{\Gamma(c x+d+1)\Gamma(e
x+f+1)}}$$ is considered (see Theorem~\ref{thm+sum}). As
consequences, for any positive integers $p\geq2$ and $a> c$, the
strict log-concavities of
$\{\sqrt[n]{\frac{1}{(p-1)n+1}\binom{pn}{n}}\}_{n\geq2}$ and
$\{\sqrt[n]{\binom{an}{cn}}\}_{n\geq30}$ are obtained, see
Corollary~\ref{cor+bino}. For more examples, the sequences
$\{\sqrt[n]{\frac{1}{2n+1}\binom{2n}{n}}\}_{n\geq1}$,
$\{\sqrt[n]{\binom{2n}{n}}\}_{n\geq1}$,
$\{\sqrt[n]{\binom{3n}{n}}\}_{n\geq1}$,
$\{\sqrt[n]{\binom{4n}{n}}\}_{n\geq1}$,
$\{\sqrt[n]{\binom{5n}{n}}\}_{n\geq1}$ and
$\{\sqrt[n]{\binom{5n}{2n}}\}_{n\geq1}$ are strictly log-concave,
respectively.

 To study the conjectures
of Sun on the monotonicity of
$\{\sqrt[n+1]{z_{n+1}}/\sqrt[n]{z_n}\}$, Chen {\it et
al.}~\cite{CGW1} found a connection between the log-behavior of
$\{\sqrt[n]{z_n}\}_{n\ge 1}$ and that of $\{{z_{n+1}}/{z_n}\}_{n\ge
0}$. Moreover, they introduced a stronger concept as follows: define
an operator $R$ on a sequence $\{z_n\}_{n\geq 0}$ by
$$R\{z_n\}_{n\geq 0}=\{x_n\}_{n\geq 0},$$ where $x_n = z_{n+1}/{z_n}$. The sequence
$\{z_n\}_{n\geq 0}$ is called {\it infinitely log-monotonic} if the
sequence $R^r\{z_n\}_{n\geq 0}$ is log-concave for all positive odd
$r$ and is log-convex for all nonnegative even $r$. In fact, the
infinite log-monotonicity is related to the logarithmically
completely monotonic function.

Recall that a function $f(x)$ is said to be {\it completely
monotonic} on an interval $I$ if $f(x)$ has derivatives of all
orders on $I$ which alternate successively in sign, that is,
$$(-1)^nf^{(n)}(x)\geq0$$
for all $x\in I$ and for all $n\geq 0$. If inequality is strict for
all $x\in I$ and for all $n\geq 0$, then $f(x)$ is said to be
strictly completely monotonic. A positive function $f(x)$ is said to
be {\it logarithmically completely monotonic} on an interval $I$ if
$\log f(x)$ satisfies
$$(-1)^n[\log f(x)]^{n}\geq0$$
for all $x\in I$ and for all $n\geq 1$. A logarithmically completely
monotonic function is completely monotonic, but not vice versa, see
Berg~\cite{Ber04}. The reader can refer to \cite{Widd46} for the
properties of completely monotonic functions and \cite{QL13} for a
survey of logarithmically completely monotonic functions. In
\cite{CGW1}, Chen {\it et al.} found the link between
logarithmically completely monotonic functions and infinite
log-monotonicity of combinatorial sequences. Thus, in Section 4, the
logarithmically complete monotonicity of some functions related to
the combinatorial sequences will be considered. As applications, for
nonnegative integers $n_{0},k_{0},\overline{k_0}$ and  positive
integers $a,b,\overline{b}$, if $a\geq b+\overline{b}$ and $-1\leq
k_0-(n_0+1)b/a\leq0$, then the sequence
$$\{\frac{(n_{0}+ia)!}{(k_0+ib)!(\overline{k_0}+i\overline{b})!}\}_{i\geq0}$$
is infinitely log-monotonic. This generalizes two results of Chen
{\it et al.} \cite{CGW1} that both the Catalan numbers
$\frac{1}{n+1}\binom{2n}{n}$ and central binomial coefficients
$\binom{2n}{n}$ are infinitely log-monotonic and strengths one
result of Su and Wang~\cite{SW08} that $\binom{dn}{\delta n}$ is
log-convex in $n$ for positive integers $d>\delta$. In addition, the
asymptotically infinite log-monotonicity of derangement numbers is
also demonstreted.

 In order to research the stronger properties of the above
functions $\theta(x)$ and $F(x)$, the logarithmically complete
monotonicity of functions $1/\sqrt[x]{a \zeta(x+b)\Gamma(x+c)}$ and
$\sqrt[x]{\rho\prod_{i=1}^n\frac{\Gamma(x+a_i)}{\Gamma(x+b_i)}}$ is
also given, which generalizes one result of Lee and
Tepedelenlio\v{g}lu about the logarithmically complete monotonicity
of $\sqrt[x]{\frac{2\sqrt{\pi}\Gamma(x+1)}{\Gamma(x+1/2)}}$,  and
one result of Qi and Li about the logarithmically complete
monotonicity of $\sqrt[x]{\frac{a\Gamma(x+b)}{\Gamma(x+c)}}$.

\section{Analytic results for the
monotonicity of the sequence $\sqrt[n]{z_n}$}

This section is to give the proof of the analytic result Theorem
\ref{thm+inc}.
\begin{proof}\quad Let $y=\sqrt[x]{f(x)}$. Then one can get
\begin{equation*}
y'=\frac{y}{x}\left(\frac{f'(x)}{f(x)}-\frac{\log{f(x)}}{x}\right).
\end{equation*}
In order to show that $\sqrt[x]{f(x)}$ is strictly increasing, it
suffices to prove
\begin{equation}\label{e1}
\frac{f'(x)}{f(x)}-\frac{\log{f(x)}}{x}>0
\end{equation}
for $x\geq N$. Since $f(N)\leq 1$ and $f(x)$ is increasing, one can
derive that
\begin{equation}\label{e2}
\frac{\log{f(x)}}{x}\leq\frac{\log{f(x)}-\log{f(N)}}{x}<\frac{\log{f(x)}-\log{f(N)}}{x-N}
\end{equation}
for $x\geq N$.

 By the mean value theorem, one can obtain
\begin{equation}\label{e3}
\frac{\log{f(x)}-\log{f(N)}}{x-N}=\frac{f'(\xi)}{f(\xi)},
\end{equation}
where $N\leq\xi\leq x$. On the other hand, it follows from
log-convexity of the function $f(x)$ that
\begin{equation}
\left(\log{f(x)}\right)''=\left(\frac{f'(x)}{f(x)}\right)'=\frac{f''(x)f(x)-f'(x)^2}{f^2(x)}\geq0,
\end{equation}
which implies that $\frac{f'(x)}{f(x)}$ is increasing. Thus,  it
follows that
\begin{equation}\label{e4}
\frac{f'(\xi)}{f(\xi)}\leq\frac{f'(x)}{f(x)}
\end{equation}
for $x\geq\xi.$ Combining (\ref{e2}), (\ref{e3}) and (\ref{e4}), one
can obtain (\ref{e1}). So $\sqrt[x]{f(x)}$ is increasing.
\end{proof}

\section{Analytic results for the
log-behavior of the sequence $\sqrt[n]{z_n}$}

In order to deal with the log-behavior of the sequence
$\sqrt[n]{z_n}$, some analytic methods will be developed in this
section. There are two main results in this section, one being the
proof of Conjecture \ref{conj} and the other being the log-behavior
of the function $F(x)$.

In the proofs, the following some known facts are needed. It follows
from \cite[Theorem 8]{Al97} that the function
$$G_0(x)=-\log{\Gamma(x)}+(x-1/2)\log{x}-x+\log{\sqrt{2\pi}}+\frac{1}{12x}$$
is strictly completely monotonic on $(0, \infty)$. This implies that
\begin{eqnarray}
\log{\Gamma(x)}&<&(x-1/2)\log{x}-x+\log{\sqrt{2\pi}}+\frac{1}{12x},\label{gama+1}\\
\left(\log{\Gamma(x)}\right)'&>&\log{x}-\frac{1}{2x}-\frac{1}{12x^2},\label{gama+2}\\
\left(\log{\Gamma(x)}\right)''&<&\frac{1}{x}+\frac{1}{2x^2}+\frac{1}{6x^3}.\label{gama+3}
\end{eqnarray}
On the other hand, \cite[Theorem 8]{Al97} also says that the
function
$$F_0(x)=\log{\Gamma(x)}-(x-1/2)\log{x}+x-\log{\sqrt{2\pi}}$$
is strictly completely monotonic on $(0, \infty)$. So
\begin{eqnarray}
\log{\Gamma(x)}&>&(x-1/2)\log{x}-x+\log{\sqrt{2\pi}},\label{gama+4}\\
\left(\log{\Gamma(x)}\right)'&<&\log{x}-\frac{1}{2x},\label{gama+5}\\
\left(\log{\Gamma(x)}\right)''&>&\frac{1}{x}+\frac{1}{2x^2}.\label{gama+6}
\end{eqnarray}
Thus, by combining these inequalities, one can get the next result,
which will be used repeatedly in the proofs.
\begin{lem}\label{bound+gama}
Let $a>0$. Assume that $h(x)=\log{\Gamma(x)}$. If $b\geq -1$ and
$ax+b\geq0$, then
\begin{eqnarray*}
x^3\left(\frac{h(ax+b+1)}{x}\right)''&\leq&-ax+(2b+1)\log{(ax+b+1)}-3b-\frac{3}{2}+\log{2\pi}+\frac{b^2+b+1/2}{ax+b+1};\\
x^3\left(\frac{h(ax+b+1)}{x}\right)''&\geq&-ax+(2b+1)\log{(ax+b+1)}-3b-3+\log{2\pi}.
\end{eqnarray*}
\end{lem}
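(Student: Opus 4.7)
The plan is to reduce everything to a direct computation of $x^3(h(ax+b+1)/x)''$ and then apply the six displayed gamma inequalities (\ref{gama+1})--(\ref{gama+6}) termwise.

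First, set $y=ax+b+1$ (so $y\geq 1$ by the hypothesis $ax+b\geq 0$) and $u(x)=h(ax+b+1)$. By the chain rule $u'(x)=ah'(y)$ and $u''(x)=a^2 h''(y)$. Writing $g(x)=u(x)/x$, a direct differentiation gives
\begin{equation*}
x^3 g''(x)\;=\;x^2u''(x)-2xu'(x)+2u(x)\;=\;a^2x^2\,h''(y)\;-\;2ax\,h'(y)\;+\;2h(y).
\end{equation*}
To obtain the upper bound I would insert the upper bound (\ref{gama+3}) on $h''$, the lower bound (\ref{gama+2}) on $h'$ (since its coefficient $-2ax$ is negative), and the upper bound (\ref{gama+1}) on $h$. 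The dual choice gives the lower bound: upper bound (\ref{gama+5}) for $h'$, lower bound (\ref{gama+6}) for $h''$, and lower bound (\ref{gama+4}) for $h$.

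The key algebraic step, common to both bounds, is the following: after substitution, all logarithmic contributions combine to $-2ax\log y+(2y-1)\log y=(2y-1-2ax)\log y=(2b+1)\log y$, since $2y-2ax=2b+2$. Moreover, using $ax=y-b-1$ one has the clean identity
\begin{equation*}
\frac{a^{2}x^{2}}{y}+\frac{ax}{y}\;=\;\frac{(y-b-1)^{2}+(y-b-1)}{y}\;=\;y-2b-1+\frac{b(b+1)}{y},
\end{equation*}
so the leading linear pieces collapse to $-y+\dots=-ax-b-1+\dots$, producing the advertised $-ax$ term. This yields
\begin{equation*}
x^3 g''(x)\;=\;-ax+(2b+1)\log y-3b-2+\log 2\pi+\frac{b(b+1)}{y}+R(x),
\end{equation*}
where $R(x)$ collects the remaining error contributions coming from the $1/y^k$ correction terms in (\ref{gama+1})--(\ref{gama+3}) (for the upper estimate) or is identically zero (for the lower estimate, since (\ref{gama+4})--(\ref{gama+6}) have no such tails).

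For the upper bound, I would then show the positive tail $R(x)=\frac{a^2x^2}{2y^2}+\frac{a^2x^2}{6y^3}+\frac{ax}{6y^2}+\frac{1}{6y}$ is at most $\tfrac12+\tfrac{1}{2y}$, using $(y-b-1)^2\leq y^2$ (which holds because $b\geq -1$ and $y\geq b+1$) and $y-b-1\leq y$; adding this to $-3b-2+\tfrac{b(b+1)}{y}$ gives exactly $-3b-\tfrac32+\tfrac{b^2+b+1/2}{y}$, as required. For the lower bound, it suffices to verify $-2+\frac{b(b+1)}{y}+\frac{a^2x^2}{2y^2}\geq -3$, which follows from $\frac{a^2x^2}{2y^2}\geq 0$ and $b(b+1)\geq -\tfrac14$ for $b\geq -1$ combined with $y\geq 1$. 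The only nontrivial obstacle is the bookkeeping in the upper-bound tail: keeping track of which combination of the three error constants $(\tfrac{1}{12x},\tfrac{1}{12x^2},\tfrac{1}{6x^3})$ contributes, and verifying the inequality $(y-b-1)^2\leq y^2$ uses the hypotheses $b\geq -1$ and $ax+b\geq 0$; everything else is linear algebra in the variable $y$.
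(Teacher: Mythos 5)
Your proposal is correct and follows essentially the same route as the paper: compute $x^3\bigl(h(ax+b+1)/x\bigr)''=a^2x^2h''(y)-2axh'(y)+2h(y)$ with $y=ax+b+1$ and then substitute the bounds (\ref{gama+1})--(\ref{gama+3}) for the upper estimate and (\ref{gama+4})--(\ref{gama+6}) for the lower one. The paper suppresses the algebra that you carry out explicitly (the collapse of the logarithmic terms to $(2b+1)\log y$, the identity $\frac{a^2x^2+ax}{y}=y-2b-1+\frac{b(b+1)}{y}$, and the estimate of the residual tail by $\tfrac12+\tfrac{1}{2y}$), and your bookkeeping checks out.
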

\begin{proof}
By $h(x)=\log{\Gamma(x)}$, it is not hard to deduce that
\begin{eqnarray*}
\left(\frac{h(ax+b+1)}{x}\right)''=\frac{a^2x^2h''(ax+b+1)-2axh'(ax+b+1)+2h(ax+b+1)}{x^3}.
\end{eqnarray*}
By (\ref{gama+1}), (\ref{gama+2}) and (\ref{gama+3}), it follows
that
\begin{eqnarray*}
&&a^2x^2h''(ax+b+1)-2axh'(ax+b+1)+2h(ax+b+1)\\
&\leq&-ax+(2b+1)\log{(ax+b+1)}-3b-\frac{3}{2}+\log{2\pi}+\frac{b^2+b+1/2}{ax+b+1}.
\end{eqnarray*}
In addition, by (\ref{gama+4}), (\ref{gama+5}) and (\ref{gama+6}),
one can also obtain that
\begin{eqnarray*}
&&a^2x^2h''(ax+b+1)-2axh'(ax+b+1)+2h(ax+b+1)\\
&\geq&-ax+(2b+1)\log{(ax+b+1)}-3b-3+\log{2\pi}.
\end{eqnarray*}
This completes the proof.
\end{proof}
In order to prove Conjecture \ref{conj}, the next result will be
used.
\begin{lem}\label{bound+zeta}
Let $\zeta(x)=\sum_{n\geq1}\frac{1}{n^x}$ the Riemann zeta function.
Define a function $\eta(x)=\zeta(x)-1$. The bound $\eta(x)\leq
3/2^x$ holds for all $x\geq 4$.
\end{lem}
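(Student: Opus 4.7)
The plan is to isolate the leading term of $\eta(x)$ and bound the remaining tail by an integral. Specifically, I would write
$$\eta(x) \;=\; \frac{1}{2^x} \;+\; \sum_{n=3}^{\infty}\frac{1}{n^x},$$
and then, using that $t\mapsto t^{-x}$ is positive and strictly decreasing on $(0,\infty)$ for every $x>0$, estimate each summand by $n^{-x}\le \int_{n-1}^{n} t^{-x}\,dt$. Summing from $n=3$ yields
$$\sum_{n=3}^{\infty} n^{-x} \;\le\; \int_{2}^{\infty} t^{-x}\,dt \;=\; \frac{2^{1-x}}{x-1},$$
where the integral is finite precisely because $x\ge 4>1$.

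Substituting this into the decomposition gives
$$\eta(x) \;\le\; \frac{1}{2^x}\Bigl(1+\frac{2}{x-1}\Bigr),$$
so the proof reduces to checking $1+\frac{2}{x-1}\le 3$ on $[4,\infty)$. For $x\ge 4$ one has $\frac{2}{x-1}\le \frac{2}{3}$, hence the parenthesis is bounded by $\frac{5}{3}<3$, and the claim follows. In fact the same argument delivers $\eta(x)\le 3/2^x$ for every $x\ge 2$, so the hypothesis $x\ge 4$ leaves considerable slack which is harmless for the later application of Lemma~\ref{bound+zeta}.

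There is no genuine obstacle: the inequality is quite loose, and the only step that deserves care is the justification of the integral comparison, which depends only on monotonicity of $t^{-x}$ and convergence of $\int_2^\infty t^{-x}\,dt$ for $x>1$. Should a sharper version ever be needed, one could retain more initial terms and apply the integral estimate from a larger index, but for the stated bound $3/2^x$ no such refinement is required.
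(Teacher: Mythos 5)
Your proof is correct, but it follows a genuinely different route from the paper's. The paper factors out the leading term as
$\eta(x)=2^{-x}\bigl(1+1.5^{-x}+2^{-x}+\cdots\bigr)$, pairs consecutive terms of the tail to bound it by $2\eta(x)$ itself, uses $1.5^{-x}\le 1/2$ for $x\ge 4$, and then solves the resulting self-referential inequality $\eta(x)\le 2^{-x}\bigl(\tfrac32+2\eta(x)\bigr)$ for $\eta(x)$. You instead bound the tail $\sum_{n\ge 3}n^{-x}$ directly by the integral comparison $\sum_{n\ge 3}n^{-x}\le\int_2^\infty t^{-x}\,dt=\frac{2^{1-x}}{x-1}$, which is valid since $t\mapsto t^{-x}$ is decreasing and $x>1$; this yields $\eta(x)\le 2^{-x}\bigl(1+\frac{2}{x-1}\bigr)\le\frac{5}{3}\cdot 2^{-x}$ for $x\ge 4$. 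Your argument is the more standard and transparent of the two, avoids the bootstrapping step, delivers a sharper constant ($5/3$ in place of $3$) on $[4,\infty)$, and, as you note, already gives the stated bound for all $x\ge 2$; the paper's version is self-contained in the sense of using only series manipulations, but buys nothing extra here. Both proofs are complete and correct.
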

\begin{proof}
Since
\begin{eqnarray*}
\eta(x)&=&\frac{1}{2^x}\left(1+\frac{1}{1.5^x}+\frac{1}{2^x}+\cdots\right)\\
&\leq&\frac{1}{2^x}\left(1+\frac{1}{1.5^x}+2\left(\zeta(x)-1\right)\right)\\
&\leq&\frac{1}{2^x}\left(1+\frac{1}{2}+2\eta(x)\right)
\end{eqnarray*}
for $x\geq4$, one can get $\eta(x)\leq\frac{3}{2^x}.$
\end{proof}
Now a result for Conjecture \ref{conj} can be stated as follows.
\begin{thm}\label{thm+con}
The function
$$\theta(x)=\sqrt[x]{2 \zeta(x)\Gamma(x+1)}$$
is log-concave on $(7.1,\infty)$.
\end{thm}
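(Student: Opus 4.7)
The plan is to prove log-concavity by showing that $(\log \theta(x))'' \le 0$ on $(7.1,\infty)$. Setting $g(x) = \log 2 + \log \zeta(x) + \log \Gamma(x+1)$ so that $\log \theta(x) = g(x)/x$, the standard identity
\[ x^3 (\log \theta(x))'' \;=\; x^2 g''(x) - 2 x g'(x) + 2 g(x) \]
reduces everything to showing the right-hand side is nonpositive on $[7.1,\infty)$. The linearity of this expression in $g$ lets me treat each of the three summands of $g$ separately.

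The main contribution comes from $\log \Gamma(x+1)$, where Lemma \ref{bound+gama} specialized to $a=1$, $b=0$ gives
\[ x^3 \left(\frac{\log \Gamma(x+1)}{x}\right)'' \;\le\; -x + \log(x+1) - \frac{3}{2} + \log 2\pi + \frac{1}{2(x+1)}, \]
with a dominant $-x$ term. The constant summand $\log 2$ contributes exactly $2\log 2$, absorbing $\log 2\pi$ into $\log 8\pi$. For the $\zeta$ contribution I plan to exploit $\zeta(x) \ge 1$ and work with $\eta = \zeta-1$: Lemma \ref{bound+zeta} gives $\log \zeta(x) \le \eta(x) \le 3/2^x$, while the crude majorizations $\log n \le \sqrt{n}$ and $(\log n)^2 \le n$ yield
\[ |(\log\zeta)'(x)| \le |\zeta'(x)| \le \eta(x-\tfrac{1}{2}), \qquad (\log \zeta)''(x) \le \zeta''(x) \le \eta(x-1). \]
Re-applying Lemma \ref{bound+zeta} to the right-hand sides controls each by an explicit constant times $2^{-x}$, so the whole $\zeta$ contribution to $x^3 (\log \theta(x))''$ is at most $(A_2 x^2 + A_1 x + A_0)/2^x$ for explicit constants, which decays exponentially.

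Combining the three estimates gives an elementary upper bound of the shape
\[ U(x) \;=\; -x + \log(x+1) + \log 8\pi - \frac{3}{2} + \frac{1}{2(x+1)} + \frac{A_2 x^2 + A_1 x + A_0}{2^x}, \]
and the problem reduces to verifying $U(x) \le 0$ on $[7.1,\infty)$. Since the $-x$ term dominates both the logarithmic growth and the $2^{-x}$ tail, a short derivative computation shows $U$ is strictly decreasing on $[7.1,\infty)$, so it suffices to check $U(7.1) < 0$ numerically; this is precisely how the specific cutoff $7.1$ gets pinned down. The main obstacle is keeping the residual $(A_2 x^2 + A_1 x + A_0)/2^x$ small enough at the endpoint $x = 7.1$ to beat the modest negative value of the $\Gamma$-part there; if the bound $\log n \le \sqrt{n}$ is too loose to close the inequality, I would refine by peeling off the $n=2$ terms in $\zeta'$ and $\zeta''$ explicitly and bounding only the remaining tails via $\eta(x-1/2)$ and $\eta(x-1)$.
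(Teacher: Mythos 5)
Your proposal is correct and follows essentially the same route as the paper: the same decomposition of $x^3(\log\theta(x))''$ into the $\log 2$, $\log\zeta$, and $\log\Gamma$ contributions, the same use of Lemma~\ref{bound+gama} with $a=1$, $b=0$ for the dominant $-x$ term, and the same majorizations $|\zeta'(x)|\le\eta(x-\tfrac12)$, $\zeta''(x)\le\eta(x-1)$, $\log\zeta(x)\le\eta(x)$ combined with Lemma~\ref{bound+zeta}. The only cosmetic difference is that you keep the exponentially decaying remainder explicit and check monotonicity of the combined bound $U(x)$, whereas the paper bounds the $\zeta$-contribution by the constant $2.67$ and the $\Gamma$-contribution by $-4.1$ separately on $[7.1,\infty)$.
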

\begin{proof}
In order to show that $\theta(x)$ is log-concave on $(7.1,\infty)$,
it suffices to prove
\begin{eqnarray}\label{equ+thta}
\left(\log{\theta(x)}\right)''&=&\left(\frac{\log2}{x}\right)''+\left(\frac{\log
\zeta(x)}{x}\right)'' +\left(\frac{\log\Gamma(x+1)}{x}\right)''\nonumber\\
&=&\frac{2\log2}{x^3}+\left(\frac{\log \zeta(x)}{x}\right)''
+\left(\frac{\log\Gamma(x+1)}{x}\right)''\\
&<&0\nonumber.
\end{eqnarray}
Noting that $\log x<\sqrt{x}$ for $x\geq2$, one has
$\zeta''(x)<\eta(x-1)$ and $|\zeta'(x)|<\eta(x-0.5).$ In addition,
it follows from $\log(x+1)\leq x$ for $x>0$ that
$\log(1+\eta(x))\leq\eta(x)\leq\frac{3}{2^x}$ by
Lemma~\ref{bound+zeta}. Thus, for $x\geq7.1$, it follows that
\begin{eqnarray}\label{bound+zeta+d}
x^3\left(\frac{\log
\zeta(x)}{x}\right)''&=&x^2\left(\frac{\zeta(x)\zeta''(x)
-\zeta'(x)^2}{\zeta(x)^2}\right)-2x\frac{\zeta'(x)}{\zeta(x)}+2\log\zeta(x)\nonumber\\
&<&\frac{x^2\zeta''(x)}{\zeta(x)}-\frac{2x\zeta'(x)}{\zeta(x)}+2\log\zeta(x)\nonumber\\
&<&2.67 ,
\end{eqnarray}
where the final inequality can be obtained  by considering the
monotonicity of the right function.

 On the other hand, by
Lemma~\ref{bound+gama}, one can get
\begin{eqnarray}\label{bound+g}
x^3\left(\frac{\log\Gamma(x+1)}{x}\right)''&\leq&-x+\log{(x+1)}-1+\log{2\pi}+\frac{1}{2(x+1)}\nonumber\\
&<&-4.1
\end{eqnarray}
for $x\geq7.1$.

 Thus, combining (\ref{equ+thta}), (\ref{bound+zeta+d}) and
 (\ref{bound+g}), one can conclude
 \begin{eqnarray*}
\left(\log{\theta(x)}\right)''
&=&\frac{2\log2}{x^3}+\left(\frac{\log \zeta(x)}{x}\right)''
+\left(\frac{\log\Gamma(x+1)}{x}\right)''\\
&<&0,
\end{eqnarray*}
as desired. This completes the proof.
\end{proof}
Notice that
$$\sqrt[n]{(-1)^{n-1}B_{2n}}=\frac{\theta^2(2n)}{4\pi^{2}}.$$
Thus, it follows from the strict log-concavity of
$\{\theta(2n)\}_{n\geq4}$ that
$\{\sqrt[n]{(-1)^{n-1}B_{2n}}\}_{n\geq4}$ is strictly log-concave.
In addition, it is easy to check that
$\{\sqrt[n]{(-1)^{n-1}B_{2n}}\}_{n\geq0}$ is strictly log-concave
for $1\leq n\leq 4$. Thus, the following result is immediate, which
was conjectured by Sun~\cite[Conjecture 2.15]{Sun-conj} and has been
verified by Luca and St\u{a}nic\u{a}~\cite{LS12} and Chen {\it et
al.} \cite{CGW1} by different methods, respectively.
\begin{cor}
The sequence $\{\sqrt[n]{(-1)^{n-1}B_{2n}}\}_{n\ge 1}$ is strictly
log-concave.
\end{cor}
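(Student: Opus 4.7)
The plan is to deduce the corollary directly from Theorem~\ref{thm+con} via the identity
$$\sqrt[n]{(-1)^{n-1}B_{2n}}=\frac{\theta^{2}(2n)}{4\pi^{2}},$$
which is recalled in the excerpt and follows from the classical formula $(-1)^{n-1}B_{2n}=2(2n)!\zeta(2n)/(2\pi)^{2n}$ together with $\Gamma(2n+1)=(2n)!$. Since $y\mapsto y^{2}/(4\pi^{2})$ is a positive map that preserves strict log-concavity (its logarithm is $2\log y$ minus a constant), it suffices to establish that the integer-indexed sequence $\{\theta(2n)\}_{n\ge 1}$ is strictly log-concave.

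First I would handle all sufficiently large $n$ using Theorem~\ref{thm+con}. A positive twice-differentiable function $g$ with $(\log g)''<0$ on an open interval $I$ satisfies the strict three-term inequality $g(b-h)g(b+h)<g(b)^{2}$ for every $h>0$ with $[b-h,b+h]\subset I$, by strict concavity of $\log g$ and the definition of concavity at the midpoint. Applied to $g=\theta$ and $I=(7.1,\infty)$ with $b=2n$ and $h=2$, the containment $[2n-2,2n+2]\subset(7.1,\infty)$ holds whenever $n\ge 5$, so Theorem~\ref{thm+con} yields the desired strict log-concavity inequality at each integer index $n\ge 5$.

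It then remains to verify the finitely many initial three-term inequalities $z_{n-1}z_{n+1}<z_{n}^{2}$ for $n=2,3,4$, where $z_{n}=\sqrt[n]{(-1)^{n-1}B_{2n}}$. I would do this by direct numerical evaluation using the tabulated Bernoulli values $B_{2}=1/6$, $B_{4}=-1/30$, $B_{6}=1/42$, $B_{8}=-1/30$, $B_{10}=5/66$, which yield the five explicit positive real numbers $z_{1},\ldots,z_{5}$ and hence the three required strict inequalities; the excerpt already flags this step as routine.

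I do not anticipate any real obstacle: the entire analytic burden has already been absorbed by Theorem~\ref{thm+con}, the passage from the continuous inequality $(\log\theta)''<0$ to the discrete three-term inequality is a standard strict-concavity argument, and the base cases reduce to an elementary finite arithmetic check on five rational multiples of $\pi$-free quantities. If any mild obstacle were to appear, it would be cosmetic: confirming that the cutoff $7.1$ produced by Theorem~\ref{thm+con} is matched cleanly at the seam $n=5$, which the argument above addresses by the choice $h=2$.
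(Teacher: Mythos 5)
Your route is the same as the paper's: the identity $\sqrt[n]{(-1)^{n-1}B_{2n}}=\theta^{2}(2n)/(4\pi^{2})$, the midpoint-concavity consequence of Theorem~\ref{thm+con} to obtain the three-term inequality for all $n\ge 5$ (the paper phrases this as the strict log-concavity of $\{\theta(2n)\}_{n\ge 4}$), and a finite check of the remaining indices. The large-$n$ half of your argument is sound, and your seam at $n=5$ is exactly what the cutoff $7.1$ permits.

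The step that would fail is the base case $n=2$, which you defer to ``direct numerical evaluation.'' With $z_n=\sqrt[n]{(-1)^{n-1}B_{2n}}$ one has $z_1=1/6$, $z_2=(1/30)^{1/2}$, $z_3=(1/42)^{1/3}$, and
$$z_1z_3\le z_2^{2}\iff \frac{1}{6}\cdot 42^{-1/3}\le\frac{1}{30}\iff 5\le 42^{1/3}\iff 125\le 42,$$
which is false; numerically $z_1z_3\approx 0.0479>0.0333\approx z_2^{2}$, equivalently the ratio $z_{n+1}/z_n$ \emph{increases} from $n=1$ to $n=2$ (about $1.095$ versus $1.576$). So the first of your three promised verifications cannot succeed, and the corollary as literally stated (strict log-concavity of the full sequence $\{z_n\}_{n\ge 1}$) is not established by this argument; what it does deliver is $z_{n-1}z_{n+1}<z_n^{2}$ for all $n\ge 3$ (the cases $n=3,4$ do check out), i.e.\ strict log-concavity of $\{z_n\}_{n\ge 2}$. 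To be fair, the paper's own proof has exactly the same defect: it asserts without computation that the sequence ``is strictly log-concave for $1\le n\le 4$,'' and that assertion is wrong at $n=2$. You should either restrict the conclusion to $n\ge 2$ or explicitly record the failure at $n=2$, rather than promising an arithmetic check that the numbers do not support.
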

Now consider the tangent numbers
$$\{T(n)\}_{n\ge 0}=\{1, 2, 16, 272, 7936, 353792,\ldots\},\qquad \cite[A000182]{OEIS}$$
which are defined by
$$\tan{ x}=\sum_{n\geq1}T(n)\frac{x^{2n-1}}{(2n-1)!}$$
and are closely related to the Bernoulli numbers:
$$T(n)=(-1)^{n-1}B_{2n}\frac{(4^{n}-1)}{2n}4^{n},$$
see \cite[(6.93)]{GKP94} for instance. So
$$\sqrt[n]{T(n)}=4\sqrt[n]{(-1)^{n-1}B_{2n}}\sqrt[n]{4^{n}-1}\sqrt[n]{\frac{1}{2n}}.$$

It is not difficult to verify that both $\sqrt[n]{4^{n}-1}$ and
$\sqrt[n]{\frac{1}{2n}}$ are log-concave in $n$ (we leave the
details to the reader). The product of log-concave sequences is
still log-concave. So the next result is immediate, which was
conjectured by Sun~\cite[Conjecture 3.5]{Sun-conj} and was verified
by Luca and St\u{a}nic\u{a}~\cite{LS12} by a discrete method.
\begin{cor}
The sequence $\{\sqrt[n]{T(n)}\}_{n\ge 1}$ is strictly log-concave.
\end{cor}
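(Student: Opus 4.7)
My plan is to exploit the multiplicative decomposition
\[
\sqrt[n]{T(n)} \;=\; 4 \cdot \sqrt[n]{(-1)^{n-1}B_{2n}} \cdot \sqrt[n]{4^n-1} \cdot \sqrt[n]{1/(2n)},
\]
which follows immediately from the identity $T(n)=(-1)^{n-1}B_{2n}(4^n-1)4^n/(2n)$ recalled above, and to reduce the corollary to three separate log-behavior statements. Since the coordinatewise product of log-concave sequences is log-concave, and multiplying a strictly log-concave sequence by a log-concave sequence yields a strictly log-concave sequence, it suffices to establish: (i) strict log-concavity of $\{\sqrt[n]{(-1)^{n-1}B_{2n}}\}$, which is precisely the preceding corollary; (ii) log-concavity of $u_n:=\sqrt[n]{4^n-1}$; and (iii) log-concavity of $v_n:=\sqrt[n]{1/(2n)}$. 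Multiplication by the positive constant $4$ is of course harmless for log-concavity.

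\textbf{Paragraph 2 (The two elementary factors).} For (ii), I would rewrite $\log u_n = \log 4 + n^{-1}\log(1-4^{-n})$ and analyze the continuous analog $\varphi(x) = x^{-1}\log(1-4^{-x})$ on $[1,\infty)$; the second derivative $\varphi''(x)$ admits a closed form whose nonpositivity follows from the exponentially rapid decay of $4^{-x}$, from which discrete log-concavity of $\{u_n\}$ is inherited. For (iii), I would set $\psi(x) = -x^{-1}\log(2x)$ and differentiate twice to obtain
\[
\psi''(x)=\frac{3-2\log(2x)}{x^{3}},
\]
which is nonpositive as soon as $2x \ge e^{3/2}$. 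Hence $\psi$ is concave on $[e^{3/2}/2,\infty)$, so three-term log-concavity of $\{v_n\}$ holds from a small index onward, and the finitely many remaining inequalities will be checked by direct numerical evaluation.

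\textbf{Paragraph 3 (Assembly and main difficulty).} Granted (i)--(iii), the product $\sqrt[n]{T(n)} = 4 \cdot \sqrt[n]{(-1)^{n-1}B_{2n}} \cdot u_n \cdot v_n$ inherits strict log-concavity from its strictly log-concave factor, which finishes the proof. The main obstacle is expected to be not analytic but combinatorial bookkeeping for small $n$: the continuous concavity of $\psi$ only becomes valid near $x \approx 2.24$, so the three-term log-concavity inequality at the smallest indices cannot be read off from the continuous argument and will have to be verified directly from the explicit initial values $T(1)=1$, $T(2)=2$, $T(3)=16$, $T(4)=272$. Once those base cases are confirmed, the asymptotic argument combined with the preceding corollary completes the proof, and conceptually the whole argument is a routine application of the product rule for log-concave sequences together with the known strict log-concavity of $\{\sqrt[n]{(-1)^{n-1}B_{2n}}\}$.
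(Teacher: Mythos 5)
Your decomposition and product-rule strategy is exactly the paper's proof of this corollary: the paper writes $\sqrt[n]{T(n)}=4\sqrt[n]{(-1)^{n-1}B_{2n}}\,\sqrt[n]{4^{n}-1}\,\sqrt[n]{1/(2n)}$, invokes the preceding corollary for the Bernoulli factor, asserts that the other two factors are log-concave ``leaving the details to the reader,'' and multiplies. Your treatment of $\sqrt[n]{4^n-1}$ is correct and can be made cleaner: with $\varphi(x)=g(x)/x$, $g(x)=\log(1-4^{-x})$, one has $x^3\varphi''(x)=x^2g''(x)-2xg'(x)+2g(x)$, and all three terms are negative (since $g<0<g'$ and $g''<0$), so $\varphi$ is concave on all of $(0,\infty)$.

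The genuine gap is at the small indices, and it is not mere bookkeeping. Your own computation shows $\psi$ is concave only for $x\ge e^{3/2}/2\approx 2.24$, and the missing case $n=2$ in fact \emph{fails}: $v_1v_3=\tfrac12\cdot 6^{-1/3}\approx 0.275>0.25=v_2^2$ (equivalently $\log 6<3\log 2$), so $\{\sqrt[n]{1/(2n)}\}$ is \emph{not} log-concave at $n=2$. Worse, the direct check you propose for the assembled sequence at $n=2$, using $T(1)=1$, $T(2)=2$, $T(3)=16$, gives $\sqrt[1]{T(1)}\cdot\sqrt[3]{T(3)}=16^{1/3}\approx 2.52>2=\bigl(\sqrt{T(2)}\bigr)^2$, so the log-concavity inequality is false there under the generating-function indexing $T(1)=1$; the same phenomenon already afflicts the Bernoulli factor, since $\tfrac16(1/42)^{1/3}>\tfrac1{30}$. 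Your argument therefore establishes the inequality only for $n\ge 3$; to obtain the corollary as stated one must either adopt the shifted indexing suggested by the paper's displayed list $\{T(n)\}_{n\ge0}=\{1,2,16,\dots\}$ (so the first comparison involves $2,16,272$) or restrict the claimed range --- a point the paper itself glosses over by leaving the elementary factors to the reader.
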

In order to develop analytic techniques to deal with the
log-behavior of $\{\sqrt[n]{z_n}\}$. In the following, the
log-behavior of a function $F(x)$ related to the Euler Gamma
function will be considered, which can be applied to some
interesting binomial coefficients.
\begin{thm}\label{thm+sum}
Given real numbers $b,d,f$ and nonnegative real numbers $a,c,e$,
define the function
$$F(x)=\sqrt[x]{\frac{\Gamma(ax+b+1)}{\Gamma(c x+d+1)\Gamma(e x+f+1)}}.$$
\begin{itemize}
\item [\rm(i)]
If $a> c+e$, then $F(x)$ is an asymptotically log-concave function.
\item [\rm(ii)] Assume $a= c+e$. If $c\ge e>0$ and $b< d+f+1/2$, then $F(x)$ is an
asymptotically log-concave function.  In particular, if $c\ge 1$ and
$b=d=f=0$, then we have $F(x)$ is a log-concave function for $x\geq
30$; if $c\ge 1$, $b=d=0$ and $f\geq1$, then $F(x)$ is a log-concave
function for $x\geq 2$.
\item [\rm(iii)] Assume $a= c+e$. If $c> e=0$ and $b< d$, then $F(x)$ is an
asymptotically log-concave function.
\item [\rm(iv)]
If $a<c+e$, then $F(x)$ is an asymptotically log-convex function.
\end{itemize}
\end{thm}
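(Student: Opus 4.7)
The plan is to work directly from
\[
\log F(x) = \frac{h(ax+b+1) - h(cx+d+1) - h(ex+f+1)}{x}, \qquad h(t) := \log\Gamma(t).
\]
Differentiating twice, multiplying by $x^{3}$, and applying Lemma \ref{bound+gama} separately to each of the three Gamma pieces (using the upper bound on the first and the lower bounds on the two that are subtracted, or vice versa when log-convexity is sought) reduces the problem to bounding an explicit elementary expression of the shape
\[
x^{3}(\log F(x))'' \,\le\, -(a-c-e)x + L(x) + O(1),
\]
where
\[
L(x) = (2b+1)\log(ax+b+1) - (2d+1)\log(cx+d+1) - (2f+1)\log(ex+f+1)
\]
and the $O(1)$ absorbs the constants and the decaying $1/(ax+b+1)$ error produced by the lemma.

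This immediately settles (i) and (iv): in (i) the leading coefficient $-(a-c-e)$ is negative, forcing the upper bound to $-\infty$ and hence eventual log-concavity of $F$; in (iv) reversing every application of Lemma \ref{bound+gama} yields a lower bound with leading coefficient $(c+e-a)>0$, hence eventual log-convexity. When $a = c+e$ the linear term vanishes and $L(x)$ dictates the asymptotics. For (ii), with $c,e>0$, the expansion $\log(\alpha x+\beta+1) = \log x + \log \alpha + O(1/x)$ gives $L(x) = (2b - 2d - 2f - 1)\log x + O(1)$, which tends to $-\infty$ precisely under the hypothesis $b < d+f+1/2$. For (iii), $\Gamma(ex+f+1) = \Gamma(f+1)$ becomes constant, contributing only a bounded term $-2\log\Gamma(f+1)$ after multiplication by $x^{3}$, and the remaining pieces yield $L(x) = (2b-2d)\log x + O(1)$, negative for large $x$ iff $b<d$.

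The main obstacle lies in the two explicit, non-asymptotic statements inside (ii), where the thresholds $x \geq 30$ and $x \geq 2$ require the upper-bound chain to be sharp rather than merely eventually negative. For these I would specialise $b=d=f=0$ (respectively $b=d=0$, $f\geq 1$) and $a=c+e$ directly into the upper bound supplied by Lemma \ref{bound+gama}, producing an explicit elementary function of $x$, verify by direct computation that it is strictly negative at the stated threshold, and then establish its monotonicity in $x$ to propagate the inequality to the entire ray. The delicate point is that $\log 30$, and especially $\log 2$, are not large, so the crude $O(1)$ constants coming from the three separate applications of the lemma must be tracked precisely; by contrast, the three asymptotic parts (i), (iii), (iv) are routine once the setup above is in place.
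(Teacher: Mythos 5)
Your proposal is correct and follows essentially the same route as the paper: both reduce $x^{3}(\log F(x))''$ via Lemma \ref{bound+gama} to the linear term $(c+e-a)x$ plus the logarithmic expression $L(x)$ plus bounded terms, read off the asymptotic sign in cases (i)--(iv) from the leading term, and handle the explicit thresholds in (ii) by specialising the resulting elementary upper bound and checking negativity and monotonicity on the stated rays. The only difference is presentational; the paper carries out the numerical verification you defer (obtaining the bounds $<-0.04$ for $x\ge 30$ and $<-0.37$ for $x\ge 2$), exactly as you outline.
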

\begin{proof}
Let $h(x)=\log{\Gamma(x)}$. By Lemma~\ref{bound+gama}, one has
\begin{eqnarray}\label{ineq+F}
\left(\log
F(x)\right)''&=&\left(\frac{h(ax+b+1)}{x}\right)''-\left(\frac{h(cx+d+1)}{x}\right)''-\left(\frac{h(e x+f+1)}{x}\right)''\nonumber\\
&=&(c+e-a)x+\log\frac{(ax+b+1)^{(2b+1)}}{(cx+d+1)^{(2d+1)}(ex+f+1)^{(2f+1)}}+3(d+f-b)\nonumber\\
&&+\frac{9}{2}-\log 2\pi+\frac{b^2+b+1/2}{ax+b+1}.
\end{eqnarray}
It is easy to prove for $a> c+e$ that
$$\lim_{x\rightarrow+\infty}(c+e-a)x+\log\frac{(ax+b+1)^{(2b+1)}}{(cx+d+1)^{(2d+1)}(ex+f+1)^{(2f+1)}}=-\infty,$$
and for $a= c+e$ that
$$\lim_{x\rightarrow+\infty}\log\frac{(ax+b+1)^{(2b+1)}}{(cx+d+1)^{(2d+1)}(ex+f+1)^{(2f+1)}}=-\infty$$
if $c\ge e>0$ and $b< d+f+1/2$ or $c> e=0$ and $b< d$.
 Thus, under conditions of (i), (ii) and (iii), respectively, by (\ref{ineq+F}) one can get $$\lim_{x\rightarrow+\infty}\left(\log
F(x)\right)''=-\infty,$$ implying that $F(x)$ is an asymptotically
log-concave function.

Assume that $a=c+e$ and $c\ge e\ge 1$. If $b=d=f=0$, then, by
(\ref{ineq+F}),
\begin{eqnarray*} \left(\log F(x)\right)''
&<&\log\frac{(ax+1)}{(cx+1)(ex+1)}+\frac{9}{2}-\log 2\pi+\frac{1}{2(ax+1)}\\
&<&-0.04
\end{eqnarray*}
for $x\geq 30$.
 If $b=d=0$ and $f\geq1$, then, by
(\ref{ineq+F}),
\begin{eqnarray*} \left(\log F(x)\right)''
&<&\log\frac{(ax+1)}{(cx+1)(ex+2)^3}+\frac{9}{2}-\log 2\pi+\frac{1}{2(ax+1)}\\
&<&-0.37
\end{eqnarray*}
for $x\geq 2$.

Finally, since the proof of (iv) is similar to that of (i), which is
omitted for brevity. This completes the proof.
\end{proof}
By Theorem \ref{thm+sum}, the next result is immediate.
\begin{prop}\label{cor+bino}
Let integers $a,b,c,d,f$ satisfiy $a> c>0$ and $b< d+f+1/2$. Then
the sequence $$\{\sqrt[n]{\frac{\Gamma(an+b+1)}{\Gamma(c
n+d+1)\Gamma((a-c) n+f+1)}}\}_{n\geq1}$$ is asymptotically
log-concave. In particular, $\{\sqrt[n]{\binom{an}{cn}}\}_{n\geq30}$
and $$\{\sqrt[n]{\frac{\Gamma(an+1)}{\Gamma(c n+1)\Gamma((a-c)
n+f+1)}}\}_{n\geq2}$$ is strictly log-concave for $f\geq 1$.
\end{prop}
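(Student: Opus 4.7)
The plan is to obtain Proposition \ref{cor+bino} as an immediate corollary of Theorem \ref{thm+sum}(ii). First I would set $e := a - c$, so that the integer hypothesis $a > c > 0$ gives $e \geq 1$ and $a = c + e$, placing the function $F(x) = \sqrt[x]{\Gamma(ax+b+1)/[\Gamma(cx+d+1)\Gamma(ex+f+1)]}$ in the equality regime of the theorem. Because the two factors in the denominator are interchangeable, I may relabel $(c,d)$ and $(e,f)$ if necessary and assume without loss of generality that $c \geq e > 0$; the remaining hypothesis $b < d + f + 1/2$ is exactly what Theorem \ref{thm+sum}(ii) requires. Applying that theorem yields asymptotic log-concavity of the function $F$, and since concavity of $\log F$ on any interval $[N,\infty)$ immediately forces $\log F(n-1) + \log F(n+1) \leq 2\log F(n)$ for every integer $n \geq N+1$, the asymptotic log-concavity of the sequence follows.

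For the two sharper ``in particular'' clauses I would invoke the two corresponding sub-clauses of Theorem \ref{thm+sum}(ii). The binomial coefficient $\binom{an}{cn}$ corresponds to $b = d = f = 0$, and the symmetry $\binom{an}{cn} = \binom{an}{(a-c)n}$ lets me reduce to $c \geq e$; the first sub-clause then states that $F(x)$ is log-concave on $[30,\infty)$, producing the claimed log-concavity of $\{\sqrt[n]{\binom{an}{cn}}\}_{n \geq 30}$. For the second clause, the parameters $b = d = 0$ and $f \geq 1$ match the second sub-clause verbatim (after the same optional swap), yielding log-concavity of $F(x)$ on $[2,\infty)$ and hence of the sequence for $n \geq 2$.

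I do not anticipate any genuine obstacle: both claims are formal consequences of Theorem \ref{thm+sum}(ii), whose substantive work (the delicate estimates passing through Lemma \ref{bound+gama}) has already been carried out. The only items requiring care are (i) the binomial symmetry used to bypass the condition $c \geq e$ in the $\binom{an}{cn}$ case, and (ii) the routine transfer of function-level log-concavity to the sequence $\{F(n)\}$ by evaluating at three consecutive integers. Strictness in the sequence statements is inherited from the strict second-derivative bounds (namely $(\log F(x))'' < -0.04$ and $< -0.37$) proved inside those sub-clauses.
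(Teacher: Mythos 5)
Your proposal is correct and follows exactly the route the paper intends: the paper's own justification is the single line ``By Theorem \ref{thm+sum}, the next result is immediate,'' and your write-up simply supplies the routine details (the relabelling to ensure $c\geq e$, the symmetry $\binom{an}{cn}=\binom{an}{(a-c)n}$, and the evaluation of the log-concave function at three consecutive integers) that the paper leaves implicit. No gaps.
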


For integer $p\geq 2$, Fuss-Catalan numbers~\cite{HP} are given by
the formula
$$C_p(n)=\frac{1}{(p-1)n+1}\binom{pn}{n}=\frac{\Gamma(pn+1)}{\Gamma(n+1)\Gamma((p-1)n+2)}.$$
It is well known that the Fuss-Catalan numbers count the number of
paths in the integer lattice $\mathbb{Z}\times \mathbb{Z}$ (with
directed vertices from $(i,j)$ to either $(i,j+1)$ or $(i+1,j))$
from the origin $(0,0)$ to $(n,(p-1)n)$ which never go above the
diagonal $(p-1)x=y$. Su and Wang~\cite{SW08} showed that
$\{\binom{an}{bn}\}_{n\geq 0}$ is log-convex for positive integers
$a>b$. Thus it is easy to see that $\{C_p(n)\}_{n\geq 0}$ is
log-convex. Chen {\it et al.} \cite{CGW1} proved that
$\sqrt[n]{\frac{1}{2n+1}\binom{2n}{n}}$ and
$\sqrt[n]{\binom{2n}{n}}$ are strictly log-concave, respectively. By
verifying the first few terms, one can get the following corollary
by Corollary \ref{cor+bino}.
\begin{cor}
The sequences $\{\sqrt[n]{\frac{1}{2n+1}\binom{2n}{n}}\}_{n\geq1}$,
$\{\sqrt[n]{\binom{2n}{n}}\}_{n\geq1}$,
$\{\sqrt[n]{\binom{3n}{n}}\}_{n\geq1}$,
$\{\sqrt[n]{\binom{4n}{n}}\}_{n\geq1}$,
$\{\sqrt[n]{\binom{5n}{n}}\}_{n\geq1}$ ,
$\{\sqrt[n]{\binom{5n}{2n}}\}_{n\geq1}$ and
$\{\sqrt[n]{C_p(n)}\}_{n\geq2}$ are strictly log-concave for any
positive integer $p\geq2$, respectively.
\end{cor}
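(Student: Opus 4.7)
The plan is to recognize each sequence in the corollary as a specific instance of the template $\frac{\Gamma(an+b+1)}{\Gamma(cn+d+1)\Gamma(en+f+1)}$ covered by Proposition \ref{cor+bino}, apply that proposition to establish strict log-concavity for all sufficiently large $n$, and then dispatch the remaining finitely many initial terms by direct arithmetic verification.

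First I would match parameters. For the binomial coefficients $\binom{an}{cn}$ with $(a,c)\in\{(2,1),(3,1),(4,1),(5,1),(5,2)\}$, we set $b=d=f=0$; the condition $a>c>0$ of Proposition \ref{cor+bino} holds in each case, so the proposition yields strict log-concavity of $\{\sqrt[n]{\binom{an}{cn}}\}_{n\geq 30}$. For the middle Catalan-like term $\frac{1}{2n+1}\binom{2n}{n}=\frac{\Gamma(2n+1)}{\Gamma(n+1)\Gamma(n+2)}$, we take $a=2$, $c=1$, $b=d=0$, $f=1$, which falls under the second half of Proposition \ref{cor+bino} (the case $f\geq 1$) and gives log-concavity for $n\geq 2$. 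For the Fuss–Catalan numbers $C_p(n)=\frac{\Gamma(pn+1)}{\Gamma(n+1)\Gamma((p-1)n+2)}$ with $p\geq 2$, the same specialisation $a=p$, $c=1$, $b=d=0$, $f=1$ applies, yielding log-concavity of $\{\sqrt[n]{C_p(n)}\}_{n\geq 2}$.

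Next I would close the gap to the stated starting index. For the five binomial sequences, this means verifying by direct computation that
\[
\sqrt[n-1]{\tbinom{a(n-1)}{c(n-1)}}\cdot\sqrt[n+1]{\tbinom{a(n+1)}{c(n+1)}}<\bigl(\sqrt[n]{\tbinom{an}{cn}}\bigr)^{2}
\]
for $n=1,2,\dots,29$ (with the appropriate convention $\binom{0}{0}=1$ at $n=1$). For $\{\sqrt[n]{C_p(n)}\}_{n\geq 2}$ and $\{\sqrt[n]{\frac{1}{2n+1}\binom{2n}{n}}\}_{n\geq 1}$, one only needs to check the single comparison at $n=1$ (and, in the Fuss–Catalan case, observe that the inequality at $n=1$ is uniform in $p\geq 2$, which can be handled by an elementary monotonicity argument in $p$ or by noting that $C_p(1)=1$, $C_p(0)=1$, so the check reduces to a one-variable inequality in $p$).

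The only step with any real friction will be the finite verification for the binomial coefficients up to $n=29$, since the threshold $n\geq 30$ produced by Proposition \ref{cor+bino} is not tight and one must carry out 29 numerical checks per sequence; however, these are unambiguous rational-number comparisons, so they present no conceptual obstacle. Everything else is immediate from Proposition \ref{cor+bino}.
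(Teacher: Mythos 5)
Your proposal takes essentially the same route as the paper: the paper's entire argument for this corollary is ``by verifying the first few terms, one can get the following corollary by Corollary~\ref{cor+bino},'' and you have simply made the parameter matching and the finite initial checks explicit. One slip to flag: your identification $\frac{1}{2n+1}\binom{2n}{n}=\frac{\Gamma(2n+1)}{\Gamma(n+1)\Gamma(n+2)}$ is not correct as written, since $\frac{\Gamma(2n+1)}{\Gamma(n+1)\Gamma(n+2)}=\frac{1}{n+1}\binom{2n}{n}$ is the Catalan number, not $\frac{1}{2n+1}\binom{2n}{n}$ (the latter is not even an integer for $n=1$). Most likely the paper's $\frac{1}{2n+1}$ is a typo for $\frac{1}{n+1}$, in which case your Gamma form is the right one and nothing is lost; if one insists on the sequence as literally printed, the clean fix is to write $\sqrt[n]{\tfrac{1}{2n+1}\binom{2n}{n}}=\sqrt[n]{\binom{2n}{n}}\cdot\sqrt[n]{\tfrac{1}{2n+1}}$ and invoke the fact (used by the paper for the tangent numbers) that a product of log-concave sequences is log-concave, since $\sqrt[n]{1/(2n+1)}$ is easily checked to be log-concave in $n$. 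A cosmetic point: for $\binom{5n}{2n}$ and for $C_p(n)$ with $p\geq 3$ the underlying Theorem~\ref{thm+sum}(ii) requires $c\ge e$, so one should order the two denominator Gamma factors accordingly (e.g.\ use $\binom{5n}{2n}=\binom{5n}{3n}$); this is harmless because the Gamma ratio is symmetric in the two factors, but worth stating.
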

\section{Logarithmically completely monotonic functions}
Since logarithmically completely monotonic functions have many
applications, it is important to know which function has such
property. In particular, Chen {\it et al.} \cite{CGW1} found the
connection between logarithmically completely monotonic functions
and infinite log-monotonicity of combinatorial sequences as follows.
\begin{thm}\label{thm+chen}\cite{CGW1}
 Assume that a function $f(x)$ such
that $[\log f(x)]''$ is completely monotonic for $x \geq 1$ and $a_n
= f(n)$ for $n \geq 1$. Then the sequence $\{a_n\}_{n\geq1}$ is
infinitely log-monotonic.
\end{thm}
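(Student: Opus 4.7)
The plan is to reduce the discrete log-monotonicity condition on $R^r\{a_n\}$ to a sign condition on derivatives of $\phi(x) := \log f(x)$, and then read off that sign from the hypothesis that $\phi''$ is completely monotonic. Let $\Delta$ denote the forward difference operator, $(\Delta g)(x) = g(x+1) - g(x)$. A straightforward induction on $r$ shows that $\log R^r\{a_n\}(n) = (\Delta^r \phi)(n)$: the base case $r=1$ is simply $\log(a_{n+1}/a_n) = \Delta\phi(n)$, and the inductive step applies the same identity to $\Delta^{r-1}\phi$ in place of $\phi$. Consequently, $R^r\{a_n\}$ is log-convex (respectively log-concave) at $n$ if and only if $\Delta^{r+2}\phi(n-1) \geq 0$ (respectively $\leq 0$), since the log-convexity inequality is precisely the statement that the second forward difference of $\Delta^r \phi$ is nonnegative.

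The second ingredient is the classical integral representation
$$\Delta^k g(x) = \int_0^1 \!\cdots\! \int_0^1 g^{(k)}(x + t_1 + \cdots + t_k)\,dt_1 \cdots dt_k,$$
which I would verify by a one-line induction on $k$ starting from $\Delta g(x) = \int_0^1 g'(x+t)\,dt$. Applying this with $k = r+2$ yields
$$\Delta^{r+2}\phi(n-1) = \int_{[0,1]^{r+2}} \phi^{(r+2)}\!\bigl(n-1 + t_1 + \cdots + t_{r+2}\bigr)\,dt_1 \cdots dt_{r+2},$$
so the sign of $\Delta^{r+2}\phi(n-1)$ is controlled by the sign of $\phi^{(r+2)}$ on the interval $[n-1,\,n+1]$.

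Finally, I would invoke the hypothesis that $\phi'' = [\log f]''$ is completely monotonic on $[1,\infty)$. By definition, $(-1)^k [\phi'']^{(k)}(x) \geq 0$ for all $k \geq 0$ and $x \geq 1$, equivalently $(-1)^r \phi^{(r+2)}(x) \geq 0$ for all $r \geq 0$ and $x \geq 1$. Feeding this into the integral representation gives $(-1)^r \Delta^{r+2}\phi(n-1) \geq 0$ for $n \geq 2$, which is exactly the required infinite log-monotonicity: log-convexity of $R^r\{a_n\}$ when $r$ is even, log-concavity when $r$ is odd. The only real subtlety is parity bookkeeping --- matching the sign $(-1)^r$ against the derivative order $r+2$ and the differencing order $r+2$ --- and once the integral representation is in hand, no further analytic input is needed. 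Complete monotonicity of $\phi''$ automatically gives $\phi \in C^\infty[1,\infty)$, so the repeated differentiation under the difference operator is fully justified.
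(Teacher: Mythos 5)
Your argument is correct and complete: the identity $\log R^r\{a_n\}(n)=\Delta^r\phi(n)$, the reduction of log-convexity/concavity of $R^r\{a_n\}$ to the sign of $\Delta^{r+2}\phi(n-1)$, the integral representation of iterated forward differences, and the parity bookkeeping via $(-1)^r\phi^{(r+2)}\geq 0$ all check out, and the arguments of $\phi^{(r+2)}$ stay in $[1,\infty)$ where the hypothesis applies. Note that the paper itself gives no proof of this statement --- it is quoted from Chen, Guo and Wang \cite{CGW1} --- and your derivation is essentially the standard argument from that source (which one can also phrase with the mean value theorem $\Delta^k g(x)=g^{(k)}(\xi)$ in place of the iterated integral).
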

Thus it is very interesting to research logarithmically complete
monotonicity of some functions related to combinatorial sequences,
which is the aim of this section.

Many sequences of binomial coefficients share various log-behavior
properties, see Tanny and Zuker \cite{TZ74,TZ76}, Su and
Wang~\cite{SW08} for instance. In particular, Su and Wang proved
that $\binom{dn}{\delta n}$ is log-convex in $n$ for positive
integers $d>\delta$. Recently, Chen {\it et al.} \cite{CGW1} proved
that both the Catalan numbers $\frac{1}{n+1}\binom{2n}{n}$ and
central binomial coefficients $\binom{2n}{n}$ are infinitely
log-monotonic. Motivated by these results, a generalization can be
stated as follows.

\begin{thm}\label{thm+e}
Let $n_{0},k_{0},\overline{k_0}$ be nonnegative integers and
$a,b,\overline{b}$ be positive integers. Define the function
\begin{eqnarray*}
G(x)=\frac{\Gamma(n_{0}+ax+1)}{{\Gamma(k_{0}+bx+1)}
{\Gamma(\overline{k_{0}}+x\overline{b}+1)}}.
\end{eqnarray*}
If $a\geq b+\overline{b}$ and $-1\leq k_0-(n_0+1)b/a\leq0$, then
$(\log G(x))''$ is a completely monotonic function for $x\geq0$. In
particular,
$$\{\frac{(n_{0}+ia)!}{(k_0+ib)!(\overline{k_0}+i\overline{b})!}\}_{i\geq0}$$
is infinitely log-monotonic.
\end{thm}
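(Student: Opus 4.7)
My strategy is to recast $(\log G(x))''$ as a Laplace transform with a non-negative density and invoke Bernstein's theorem. Differentiating $\log G$ twice gives
$$(\log G(x))'' = a^2\psi'(n_0+ax+1) - b^2\psi'(k_0+bx+1) - \overline{b}^2\psi'(\overline{k_0}+\overline{b}x+1),$$
where $\psi'$ is the trigamma function. Applying the integral representation $\psi'(z) = \int_0^\infty \frac{t\,e^{-zt}}{1-e^{-t}}\,dt$ and rescaling by $s=at$, $s=bt$, $s=\overline{b}t$ in the three integrals respectively produces the unified form
$$(\log G(x))'' = \int_0^\infty e^{-xs}\, s\, \Phi(s)\, ds,\qquad \Phi(s) := \frac{e^{-(n_0+1)s/a}}{1-e^{-s/a}} - \frac{e^{-(k_0+1)s/b}}{1-e^{-s/b}} - \frac{e^{-(\overline{k_0}+1)s/\overline{b}}}{1-e^{-s/\overline{b}}}.$$
By Bernstein's theorem (see \cite{Widd46}), complete monotonicity of $(\log G(x))''$ on $[0,\infty)$ is then equivalent to $\Phi(s)\geq 0$ for every $s>0$.

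The crux is the pointwise bound $\Phi\geq 0$. I would expand each fraction as a geometric series $\frac{e^{-cs/d}}{1-e^{-s/d}} = \sum_{j\geq c} e^{-js/d}$ so that $\Phi(s)$ becomes a signed combination of exponentials $e^{-rs}$ whose rates $r$ come from three multisets: $\{j/a : j\geq n_0+1\}$ with sign $+$, $\{m/b : m\geq k_0+1\}$ with sign $-$, and $\{m/\overline{b} : m\geq \overline{k_0}+1\}$ with sign $-$. To prove $\Phi\geq 0$ it suffices to construct a rate-monotone injection from the union of the two negative multisets into the positive multiset, matching each negative rate $r^-$ with a distinct positive rate $r^+\leq r^-$; each matched pair then contributes $e^{-r^+ s}-e^{-r^- s}\geq 0$, and the unmatched positive terms contribute further nonnegative summands. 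By sorting the rates and greedy matching, the existence of such an injection is equivalent to the uniform counting inequality
$$(\lfloor Ra\rfloor - n_0)_+ \;\geq\; (\lfloor Rb\rfloor - k_0)_+ \;+\; (\lfloor R\overline{b}\rfloor - \overline{k_0})_+ \qquad\text{for every } R>0,$$
with $(y)_+:=\max(y,0)$. Verifying this is the main technical task; it rests on the floor superadditivity $\lfloor Ra\rfloor \geq \lfloor Rb\rfloor+\lfloor R\overline{b}\rfloor+\lfloor R\delta\rfloor$ (where $\delta := a-b-\overline{b}\geq 0$) combined with the hypothesis $(k_0+1)a\geq (n_0+1)b$, equivalent to $-1\leq k_0-(n_0+1)b/a$, and its analogue for $\overline{k_0}$, which together guarantee that the smallest positive rate $(n_0+1)/a$ is dominated by each of the smallest negative rates and hence that the inequality holds just above each critical threshold.

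With $\Phi\geq 0$ in hand, Bernstein's theorem yields the complete monotonicity of $(\log G(x))''$, after which Theorem~\ref{thm+chen} applied to $f(x)=G(x)$ immediately gives the infinite log-monotonicity of $\{(n_0+ia)!/[(k_0+ib)!\,(\overline{k_0}+i\overline{b})!]\}_{i\geq 0}$. I expect the hardest step to be the uniform verification of the counting inequality: the regime $R\to\infty$ follows cleanly from the density condition $a\geq b+\overline{b}$, and very small $R$ from the rate-ordering above, but one has to handle the intermediate range in which the various $(\,\cdot\,)_+$ switch on by a careful case-by-case bookkeeping, and the boundary case $a=b+\overline{b}$ is especially tight because the density budget is then saturated and every floor discrepancy matters.
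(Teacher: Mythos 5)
Your reduction is sound as far as it goes: the representation $(\log G(x))''=\int_0^\infty e^{-xs}\,s\,\Phi(s)\,ds$ is correct, and by Bernstein's theorem everything does come down to $\Phi\ge 0$. This is essentially where the paper also lands: it computes $(-1)^n[\log G(x)]^{(n)}$ for all $n\ge 2$ from the integral representation of $[\log\Gamma]^{(n)}$ and reduces to the positivity of the same kernel (after rescaling), which it calls $h(t,u)$. Your route to positivity is genuinely different, though: you expand in geometric series and propose a rate-monotone matching governed by a Hall-type counting inequality, whereas the paper writes the kernel as a concave function of the auxiliary parameter $u=k_0-(n_0+1)b/a$, checks only the endpoints $u=0$ and $u=-1$, and finishes using the monotonicity of $se^{-s}/(1-e^{-s})$ together with $1/p+1/q\le 1$ where $p=a/b$, $q=a/\overline{b}$.

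The gap is that the counting inequality is the entire content of the proof, you do not prove it, and under the hypotheses actually stated it is false. Take $a=2$, $b=\overline{b}=1$, $n_0=10$, $k_0=5$, $\overline{k_0}=0$: then $a=b+\overline{b}$ and $k_0-(n_0+1)b/a=-1/2\in[-1,0]$, yet at $R=5.5$ your inequality reads $(11-10)_+\ge(5-5)_++(5-0)_+$, i.e.\ $1\ge 5$. This is not an artifact of the greedy matching: here the smallest negative rate is $1$ (from the $\overline{k_0}$-sum) while the smallest positive rate is $(n_0+1)/a=5.5$, so $\Phi(s)\sim-e^{-s}<0$ as $s\to\infty$; correspondingly $(\log G(x))''=4\psi'(2x+11)-\psi'(x+6)-\psi'(x+1)\sim-\tfrac{9}{2x^2}<0$, so $(\log G)''$ is not even nonnegative and the conclusion itself fails for these parameters. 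Your sketch silently imports ``the analogue for $\overline{k_0}$,'' which is not among the hypotheses; some further condition, e.g.\ $(k_0+\overline{k_0}+1)a\ge(n_0+1)(b+\overline{b})$ (which does hold in the binomial and Fuss--Catalan applications), is genuinely needed. To be fair, the paper's own proof has the same blind spot: its kernel $h(t,u)$ only dominates the true integrand when that extra relation holds. So the theorem as printed is too general for either argument, and as written your proposal neither proves the counting inequality nor could do so without strengthening the hypotheses.
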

\begin{proof}
By Theorem \ref{thm+chen}, it suffices to show that $(\log G(x))''$
is a completely monotonic function for $x\geq0$. Let $ g(x)=\log
G(x).$ So
\begin{eqnarray}\label{eq}
&&[g(x)]^{(n)}\nonumber\\
&=&[\log\Gamma(n_{0}+ax+1)]^{(n)}-[\log{\Gamma(k_{0}+bx+1)}]^{(n)}-[\log{\Gamma(\overline{k_{0}}+x\overline{b}+1)}]^{(n)}\nonumber\\
&=&(-1)^n\int_{0}^{\infty}\frac{t^{n-1}}{1-e^{-t}}\left[a^ne^{-t(n_{0}+ax+1)}-b^ne^{-t(k_{0}+bx+1)}-\overline{b}^ne^{-t(\overline{k_{0}}+x\overline{b}+1)}\right]dt\\
&=&(-1)^n\int_{0}^{\infty}a^nt^{n-1}e^{-tax}\left[\frac{e^{-(n_0+1)t}}{1-e^{-t}}-\frac{e^{-ta(k_{0}+1)/b}}{1-e^{-at/b}}-\frac{e^{-ta(\overline{k_{0}}+1)/\overline{b}}}{1-e^{-at/\overline{b}}}\right]dt\nonumber
\end{eqnarray}
since
$$[\log\Gamma(x)]^{(n)}=(-1)^{n}\int_{0}^{\infty}\frac{t^{n-1}e^{-tx}}{1-e^{-t}}dt$$ for
$x>0$ and $n\geq2$, see~\cite[p.16]{MOS66} for instance.

It follows from $a>b>0$ that for further simplification denote
$u=k_0-(n_0+1)b/a$, $p=a/b$, and $q=a/\overline{b}$. Clearly,
$\frac{1}{p}+\frac{1}{q}\leq1$. So one can deduce that
\begin{eqnarray}\label{eqq}
(-1)^n[g(x)]^{(n)}
&=&\int_{0}^{\infty}a^nt^{n-1}e^{-t(n_{0}+ax+1)}h(t,u)dt,
\end{eqnarray}
where
$$h(t,u)=\frac{1}{1-e^{-t}}-\frac{e^{-tp(u+1)}}{1-e^{-pt}}-\frac{e^{uqt}}{1-e^{-qt}}.$$

Furthermore, one can obtain the next claim for $-1\leq
k_0-(n_0+1)b/a\leq0$.
\begin{cl}
If $-1\leq u\leq 0$, then $h(t,u)>0$.
\end{cl}
\textbf{Proof of Claim:} It is obvious that $h(t, u)$ is concave in
$u$. Thus it suffices to show $h(t, u)> 0$ for $u =-1$ and $u = 0$.
Setting $u = 0$ since the case $u =-1$ can be obtained by switching
the roles of $p$ and $q$, one has
\begin{eqnarray*}
h(t,0)&=&\frac{e^{-t}}{1-e^{-t}}-\frac{e^{-tp}}{1-e^{-pt}}-\frac{e^{-qt}}{1-e^{-qt}}.
\end{eqnarray*}
Noting for $s>0$ that function $$f(s)=\frac{se^{-s}}{1-e^{-s}}$$
strictly decreases in $s$ and $\frac{1}{p}+\frac{1}{q}\leq1$, one
gets that
\begin{eqnarray*}
h(t,0)&\geq&(\frac{1}{p}+\frac{1}{q})\frac{e^{-t}}{1-e^{-t}}-\frac{e^{-tp}}{1-e^{-pt}}-\frac{e^{-qt}}{1-e^{-qt}}\\
&=&\frac{f(t)-f(tp)}{tp}+\frac{f(t)-f(tq)}{tq}\\
&\ge&0.
\end{eqnarray*}
 This completes the proof of this
Claim.

Thus, by (\ref{eqq}) and this Claim, one has $(-1)^n[g(x)]^{(n)}>0$,
which implies that $(\log G(x))''$ is a completely monotonic
function. This completes the proof.
\end{proof}
By Theorem~\ref{thm+e}, the following two corollaries are immediate.
\begin{cor}
Let $n_{0},k_{0},\D,\d$ be four nonnegative integers. Define the
sequence
\begin{equation*}\label{ai}
    C_i=\binom{n_{0}+i\D}{k_0+i\d},\qquad i=0,1,2,\ldots.
\end{equation*}
If $\D>\d>0$ and $-1\leq k_0-(n_0+1)\d/\D\leq0$, then the sequence
$\{C_n\}_{n\geq0}$ infinitely log-monotonic.
\end{cor}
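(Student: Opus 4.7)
The plan is to derive this corollary as a direct specialization of Theorem~\ref{thm+e}. The key observation is that the binomial coefficient $\binom{n_0 + i\D}{k_0 + i\d}$ can be rewritten using factorials as
\begin{equation*}
\binom{n_0 + i\D}{k_0 + i\d} = \frac{(n_0 + i\D)!}{(k_0 + i\d)!\,\bigl((n_0 - k_0) + i(\D - \d)\bigr)!},
\end{equation*}
so that it matches the general shape of the sequence appearing in Theorem~\ref{thm+e} upon taking $a = \D$, $b = \d$, $\overline{b} = \D - \d$, and $\overline{k_0} = n_0 - k_0$.

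First, I would verify that these parameter choices satisfy the hypotheses of Theorem~\ref{thm+e}. Since $\D > \d > 0$, the quantity $\overline{b} = \D - \d$ is a positive integer. The hypothesis $-1 \le k_0 - (n_0+1)\d/\D \le 0$ immediately gives $k_0 \le (n_0+1)\d/\D < n_0 + 1$, whence $k_0 \le n_0$ and thus $\overline{k_0} = n_0 - k_0$ is a nonnegative integer. The condition $a \ge b + \overline{b}$ becomes $\D \ge \d + (\D - \d) = \D$, which holds (with equality). Finally, the condition $-1 \le k_0 - (n_0+1)b/a \le 0$ is literally the hypothesis of the corollary.

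With these identifications in place, Theorem~\ref{thm+e} immediately yields that
\begin{equation*}
\left\{\frac{(n_0 + i\D)!}{(k_0 + i\d)!\,((n_0 - k_0) + i(\D - \d))!}\right\}_{i \ge 0} = \{C_i\}_{i \ge 0}
\end{equation*}
is infinitely log-monotonic, which is exactly the conclusion.

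Since all the analytical work — the computation of derivatives of $\log\Gamma$ as integrals, the concavity-in-$u$ argument, and the key inequality $h(t,u) > 0$ via monotonicity of $f(s) = se^{-s}/(1-e^{-s})$ — is carried out in Theorem~\ref{thm+e}, there is essentially no further obstacle. The only thing one must be careful about is the boundary case $a = b + \overline{b}$, i.e., $1/p + 1/q = 1$ exactly; but the proof of Theorem~\ref{thm+e} permits $\tfrac{1}{p} + \tfrac{1}{q} \le 1$, so equality is allowed. Thus the corollary follows as a clean application of the theorem, and the proof will amount to little more than stating the substitution and quoting Theorem~\ref{thm+e}.
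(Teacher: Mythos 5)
Your proof is correct and is exactly the route the paper takes: the paper states the corollary is ``immediate'' from Theorem~\ref{thm+e}, meaning precisely the substitution $a=\D$, $b=\d$, $\overline{b}=\D-\d$, $\overline{k_0}=n_0-k_0$ that you spell out (including the useful check that $k_0\le n_0$ follows from the hypothesis). Nothing further is needed.
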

\begin{cor}
The Fuss-Catalan sequence $\{C_p(n)\}_{n\geq0}$ is  infinitely
log-monotonic, where $p\geq 2$ and
$C_p(n)=\frac{1}{(p-1)n+1}\binom{pn}{n}.$
\end{cor}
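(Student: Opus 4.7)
The plan is to derive this corollary as an immediate specialization of Theorem~\ref{thm+e}. First I would rewrite the Fuss-Catalan number as a product of Gamma-factorials in the shape required by that theorem. Using $\binom{pn}{n} = \frac{(pn)!}{n!\,((p-1)n)!}$ and absorbing the prefactor $\frac{1}{(p-1)n+1}$ into the denominator, one obtains
$$C_p(n) \;=\; \frac{1}{(p-1)n+1}\binom{pn}{n} \;=\; \frac{(pn)!}{n!\,((p-1)n+1)!}.$$
This matches the template $\frac{(n_0+ia)!}{(k_0+ib)!\,(\overline{k_0}+i\overline{b})!}$ of Theorem~\ref{thm+e} with the parameter assignment $n_0 = 0$, $a = p$, $k_0 = 0$, $b = 1$, $\overline{k_0} = 1$, and $\overline{b} = p-1$.

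Next I would verify the two hypotheses of Theorem~\ref{thm+e}. The first, $a \geq b + \overline{b}$, becomes $p \geq 1 + (p-1) = p$, which holds with equality for every integer $p \geq 2$. The second, $-1 \leq k_0 - (n_0+1)b/a \leq 0$, becomes $-1 \leq -1/p \leq 0$, which is clear since $p \geq 2$. With both hypotheses confirmed, Theorem~\ref{thm+e} directly gives the infinite log-monotonicity of $\{C_p(n)\}_{n \geq 0}$.

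Since the argument is just a parameter substitution, there is no substantive obstacle; the one point that requires care is the shift $\overline{k_0} = 1$ (rather than $0$), because the denominator factorial is $((p-1)n+1)!$ and not $((p-1)n)!$. Once this bookkeeping is correctly recorded, the conclusion is immediate from Theorem~\ref{thm+e}.
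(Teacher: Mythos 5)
Your proof is correct and matches the paper's intended argument: the paper itself records $C_p(n)=\frac{\Gamma(pn+1)}{\Gamma(n+1)\Gamma((p-1)n+2)}$, which is exactly your decomposition $\frac{(pn)!}{n!\,((p-1)n+1)!}$, and the corollary is then the specialization of Theorem~\ref{thm+e} with $n_0=k_0=0$, $\overline{k_0}=1$, $a=p$, $b=1$, $\overline{b}=p-1$. Your verification of the two hypotheses ($p\geq 1+(p-1)$ and $-1\leq -1/p\leq 0$) is exactly what is needed.
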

The derangements number $d_n$ is a classical combinatorial number.
It is log-convex and ratio log-concave, see \cite{LW07} and
\cite{CGW} respectively. Noted that $\{\Gamma(n)\}_{n\geq1}$ is
strictly infinitely log-monotonic (see Chen {\it et al.}
\cite{CGW1}) and
\begin{eqnarray}\label{e} |d_{n}-\frac{n!}{e}|\leq\frac{1}{2}
\end{eqnarray} for $n\geq 3$ (see \cite{HSW12}),
the following interesting  result can be demonstrated.
\begin{thm}
The sequence of the derangements numbers $\{d_n\}_{n\ge 3}$ is
asymptotically infinitely log-monotonic.
\end{thm}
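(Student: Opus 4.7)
The plan is to realize $\{d_n\}$ as a super-exponentially small multiplicative perturbation of $c_n:=\Gamma(n+1)/e$ and then transfer the strict infinite log-monotonicity of $\{c_n\}$ to $\{d_n\}$ asymptotically. The excerpt supplies the two key inputs: $|d_n-n!/e|\le 1/2$ for $n\ge 3$ (inequality~\eqref{e}), and the strict infinite log-monotonicity of $\{\Gamma(n)\}_{n\ge 1}$ established by Chen \emph{et al.}~\cite{CGW1}. First I would observe that $\{c_n\}$ is itself strictly infinitely log-monotonic: index shifting preserves the property, and the constant factor $1/e$ cancels out after a single application of the operator $R$. Writing $d_n = c_n(1+\delta_n)$, the bound above forces $|\delta_n|\le e/(2\cdot n!)$, so the relative perturbation decays super-exponentially.

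Next, set $a_n^{(k)}:=R^k d_n$ and $b_n^{(k)}:=R^k c_n$, and prove by induction on $k\ge 0$ that
\[
\frac{a_n^{(k)}}{b_n^{(k)}}=1+\varepsilon_n^{(k)},\qquad |\varepsilon_n^{(k)}|\le \frac{C_k}{n!},
\]
for constants $C_k$ depending only on $k$. The base case $k=0$ is the bound on $\delta_n$. The inductive step follows from
\[
\frac{a_n^{(k+1)}}{b_n^{(k+1)}}=\frac{a_{n+1}^{(k)}/a_n^{(k)}}{b_{n+1}^{(k)}/b_n^{(k)}}=\frac{1+\varepsilon_{n+1}^{(k)}}{1+\varepsilon_n^{(k)}},
\]
which yields $|\varepsilon_n^{(k+1)}|\le 4C_k/n!$ once the denominator stays bounded below (say by $1/2$, which happens as soon as $C_k/n!<1/2$). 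Hence the super-exponential bound propagates with $C_{k+1}\le 4C_k$.

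Finally, I would compare magnitudes. A direct computation gives $b_n^{(2)}-1=1/(n+1)$ and $b_n^{(3)}-1=-1/(n+2)^2$, and more generally $|b_n^{(k)}-1|=\Theta(1/n^{k-1})$ for each fixed $k\ge 2$, with sign dictated by the strict infinite log-monotonicity of $\{c_n\}$ (positive for even $k$, negative for odd $k$). From
\[
a_n^{(k)}-1=\bigl(b_n^{(k)}-1\bigr)+b_n^{(k)}\,\varepsilon_n^{(k)},
\]
the polynomially small main term dominates the super-exponentially small error; so for every $k\ge 2$ the sign of $a_n^{(k)}-1$ coincides with that of $b_n^{(k)}-1$ for all $n$ sufficiently large (with threshold depending on $k$). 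This is precisely the asymptotic infinite log-monotonicity of $\{d_n\}_{n\ge 3}$.

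The main obstacle will be to establish that $|b_n^{(k)}-1|$ really decays only polynomially in $n$ at every level $k$, so that the super-exponentially small error $\varepsilon_n^{(k)}$ cannot flip its sign. The cleanest route is probably to work with $(\log c_n)''=\psi'(n+1)$, whose iterated differences decay like powers of $1/n$ via the asymptotic expansion of the polygamma functions; alternatively one exploits explicit recurrences among ratios of consecutive factorials to get a uniform lower bound of order $1/n^{k-1}$.
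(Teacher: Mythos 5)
Your proposal is correct in outline and rests on exactly the same two inputs as the paper: the bound $|d_n-n!/e|\le 1/2$ for $n\ge 3$ and the strict infinite log-monotonicity of $\{\Gamma(n)\}_{n\ge1}$. The difference is in execution. The paper works additively: it sandwiches $e\,d_n$ between $\Gamma(n+1)-3/2$ and $\Gamma(n+1)+3/2$, verifies the first two levels explicitly (log-convexity of $\{d_n\}$ for $n\ge 4$, log-concavity of $R\{d_n\}$ for $n\ge 8$) by direct inequalities in $\Gamma$, and then asserts that the higher levels follow ``similarly'' by the sign-preserving property of limits. You work multiplicatively, writing $d_n=c_n(1+\delta_n)$ with $|\delta_n|\le e/(2\,n!)$ and propagating a relative error $|\varepsilon_n^{(k)}|\le C_k/n!$ through the operator $R$ by induction on $k$; that induction (with the harmless loss $C_{k+1}\le 4C_k$ once $C_k/n!<1/2$) is a cleaner and more systematic way of handling all levels at once. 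Both arguments ultimately hinge on the same quantitative point, namely that the margins $|R^k c_n-1|$ in the strict inequalities for the Gamma function decay only polynomially in $n$, so that a factorially small perturbation cannot flip their signs; the paper leaves this entirely implicit in its appeal to limits, whereas you name it as the remaining obstacle and sketch a valid route: $\log R^k c_n=\Delta^{k-2}\log\bigl(1+\tfrac{1}{n+1}\bigr)$ has leading term $(-1)^k(k-2)!/\bigl((n+1)\cdots(n+k-1)\bigr)$, which is $\Theta(1/n^{k-1})$ with the required alternating sign. Writing out that one computation would make your argument complete, and arguably tighter than the paper's own.
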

\begin{proof}
From (\ref{e}), one can deduce
$$\frac{n!}{e}-\frac{1}{2}\leq d_n\leq \frac{n!}{e}+\frac{1}{2},$$
which implies $$\Gamma(n+1)-\frac{3}{2}\leq e d_n\leq
\Gamma(n+1)+\frac{3}{2}.$$ Thus
\begin{eqnarray*}
&&e^2(d_{n+1}d_{n-1}-d_n^2)\\
&\geq&[\Gamma(n+2)-1.5][\Gamma(n)-1.5]-[\Gamma(n+1)+1.5]^2\\
&>0&
\end{eqnarray*}
for $n\ge4$, which implies that $\{d_n\}_{n\ge 4}$ is log-convex.
Note that \begin{eqnarray*}
&&e^4(d_{n+1}^3d_{n-1}-d_{n}^3d_{n+2})\\
&\geq&[\Gamma(n+2)-1.5]^3[\Gamma(n)-1.5]-[\Gamma(n+1)+1.5]^3[\Gamma(n+3)+1.5]\\
&>0&
\end{eqnarray*}
for $n\ge8$, which implies that $R\{d_n\}_{n\geq 8}$ is log-concave.
Because $\{\Gamma(n)\}_{n\geq1}$ is strictly infinitely
log-monotonic, similarly, it can be proceeded to the higher order
log-monotonicity. Thus, for any positive integer $k $, by the
sign-preserving property of limits, one can obtain that there exists
a positive $N$ such that the sequence $R^r\{d_n\}_{n\geq N}$ is
log-concave for positive odd $r$ and is log-convex for positive even
$r$. Thus, the sequence of the derangements numbers $\{d_n\}_{n\ge
3}$ is asymptotically infinitely log-monotonic.
\end{proof}
In the following, we will continous to give two kinds of
logarithmically completely monotonic functions. In order to consider
a stronger result for Theorem~\ref{thm+con}, given $a,b,c>0$, define
the function
$$\theta_{a,b,c}(x)=\sqrt[x]{a \zeta(x+b)\Gamma(x+c)}.$$
It is known that the Riemann zeta function $\zeta(x)$ is
logarithmically completely monotonic on $(1,+\infty)$ and the
function $[\log\Gamma(x)]^{''}$ is completely monotonic on
$(0,+\infty)$, see Chen {\it et al.} \cite{CGW1}. Basing on these
results, one can demonstrate the next.
\begin{thm}
Let $a,b,c$ be positive real numbers, where $b\geq 1$. If
$a\zeta(b)\Gamma(c)\leq1$, then the reciprocal of the function
$\theta_{a,b,c}(x)$ is logarithmically completely monotonic on
$(1,\infty)$.
\end{thm}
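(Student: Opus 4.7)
The plan is, setting $\phi(x):=\log[a\zeta(x+b)\Gamma(x+c)]$, to establish the equivalent form $(-1)^{n+1}(\phi(x)/x)^{(n)}\geq 0$ for every $n\geq 1$ and every $x\in(1,\infty)$ of the logarithmic complete monotonicity of $1/\theta_{a,b,c}$. Before doing anything analytic, I would point out that the hypothesis $a\zeta(b)\Gamma(c)\leq 1$ tacitly forces $b>1$, since $\zeta(1)=+\infty$; therefore $\phi$ is smooth at $0$ and $\phi(0)=\log[a\zeta(b)\Gamma(c)]\leq 0$.

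The main tool I would use is the elementary identity
\[
\frac{\phi(x)}{x}=\frac{\phi(0)}{x}+\int_0^1\phi'(xs)\,ds,
\]
obtained from $\phi(x)-\phi(0)=\int_0^x\phi'(t)\,dt$ by the substitution $t=xs$. Differentiating $n$ times in $x$ and multiplying by $(-1)^{n+1}$ yields the clean decomposition
\[
(-1)^{n+1}\left(\frac{\phi(x)}{x}\right)^{(n)}=\frac{-n!\,\phi(0)}{x^{n+1}}+\int_0^1 s^n\,(-1)^{n+1}\phi^{(n+1)}(xs)\,ds.
\]
The first summand is nonnegative from $\phi(0)\leq 0$. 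The second summand will be nonnegative once $\phi''$ is known to be completely monotonic on $(0,\infty)$, because complete monotonicity of $\phi''$ is precisely the statement $(-1)^m\phi^{(m)}(y)\geq 0$ for all $m\geq 2$ and $y>0$, and here $m=n+1\geq 2$.

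Complete monotonicity of $\phi''=[\log\zeta(x+b)]''+[\log\Gamma(x+c)]''$ on $(0,\infty)$ decomposes into two halves already recalled in the paper. The summand $[\log\Gamma(x+c)]''$ is the shifted trigamma function, which is completely monotonic on $(-c,\infty)\supseteq(0,\infty)$. For $[\log\zeta(x+b)]''$, the paper cites that $\zeta$ is logarithmically completely monotonic on $(1,\infty)$, i.e.\ $(-1)^m(\log\zeta)^{(m)}\geq 0$ for all $m\geq 1$; restricting this to $m\geq 2$ is exactly complete monotonicity of $(\log\zeta)''$ on $(1,\infty)$, and the constraint $b>1$ guarantees $x+b\in(1,\infty)$ for every $x>0$. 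The main (mild) obstacle in the whole argument is the observation that $b\geq 1$ must be strengthened silently to $b>1$, since otherwise $\phi(0)$ fails to be finite and the above integral representation collapses; once this is recognised, the sign analysis of the two summands is a formal consequence of the completely monotonic and logarithmically completely monotonic properties of $\Gamma$ and $\zeta$ already invoked in the paper, and one in fact obtains the conclusion on $(0,\infty)$, hence a fortiori on $(1,\infty)$.
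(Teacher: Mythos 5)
Your proposal is correct and follows essentially the same route as the paper: your identity $\phi(x)/x=\phi(0)/x+\int_0^1\phi'(xs)\,ds$, differentiated $n$ times, is exactly the paper's integral representation for $(g(x)/x)^{(n)}$ after the substitution $t=xs$, and the sign analysis of the boundary term via $\phi(0)\leq 0$ and of the integral term via the logarithmically complete monotonicity of $\zeta$ and the complete monotonicity of $(\log\Gamma)''$ is identical. Your side remarks --- that the hypothesis silently forces $b>1$, and that the conclusion in fact holds on $(0,\infty)$ --- are accurate refinements the paper does not make explicit, but they do not alter the argument.
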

\begin{proof}
Since
$$\log\theta^{-1}_{a,b,c}(x)=-\frac{\log\left(a\zeta(x+b)\Gamma(x+c)\right)}{x}=-\frac{\log
a+\log\zeta(x+b)+\log\Gamma(x+c)}{x},$$ in order to show that
$\theta^{-1}_{a,b,c}(x)$ is logarithmically completely monotonic on
$(1,\infty)$, it suffices to prove
$$(-1)^n\log^{(n)}\theta^{-1}_{a,b,c}(x)\geq0$$ for all $n\geq1$.
Note that a known formula as follows:
\begin{eqnarray}\label{eq+d}
\left(\frac{g(x)}{x}\right)^{(n)}=\frac{(-1)^ng(0)n!}{x^{n+1}}+x^{-n-1}\int_0^x
t^n g^{(n+1)}(x)dt,
\end{eqnarray}
which can be easily proved by induction. Thus, one can deduce for
$n\geq1$ and $x>1$ that
\begin{eqnarray*}
&&(-1)^n\log^{(n)}\theta^{-1}_{a,b,c}(x)\\
&&=\frac{-n!\log{a\zeta(b)\Gamma(c)}}{x^{n+1}}+x^{-n-1}\int_0^x t^n
(-1)^{n+1}\left[\left(\log\zeta(x+b)\right)^{(n+1)}+\left(\log\Gamma(x+c)\right)^{(n+1)}\right]dt\\
&&\geq0
\end{eqnarray*}
since $\log{a\zeta(b)\Gamma(c)}\leq0$,
$(-1)^{n+1}\left(\log\zeta(x+b)\right)^{(n+1)}\geq0$ and
$(-1)^{n+1}\left(\log\Gamma(x+c)\right)^{(n+1)}\geq0$.
 This completes the proof.
\end{proof}
In \cite{Al97}, the next result was proved by Alzer.
\begin{thm}\emph{\cite{Al97}}
Let nonnegative sequences $0\leq a_1\leq a_2\leq a_3\leq\cdots\leq
a_n$ and $0\leq b_1\leq b_2\leq b_3\leq\cdots\leq b_n$. If
$\sum_{i=1}^ka_i\leq \sum_{i=1}^kb_i$ for $k=1,2,\ldots,n$, then the
function $$\prod_{i=1}^n\frac{\Gamma(x+a_i)}{\Gamma(x+b_i)}$$ is
 completely monotonic on $(0,\infty)$.
\end{thm}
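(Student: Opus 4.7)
The plan is to show that $F(x):=\prod_{i=1}^{n}\Gamma(x+a_i)/\Gamma(x+b_i)$ is in fact \emph{logarithmically} completely monotonic on $(0,\infty)$, which, by the result of Berg already cited in the paper, immediately implies the asserted complete monotonicity. This reduces the theorem to exhibiting $-[\log F]'(x)$ as a Laplace transform of a nonnegative function.

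I would begin with the standard integral representation
$$[\log\Gamma]'(x+b)-[\log\Gamma]'(x+a)=\int_{0}^{\infty}\frac{e^{-(x+a)t}-e^{-(x+b)t}}{1-e^{-t}}\,dt,$$
sum over $i$, and interchange the sum with the integral to obtain
$$-[\log F]'(x)=\int_{0}^{\infty}e^{-xt}\,H(t)\,dt,\qquad H(t):=\frac{\sum_{i=1}^{n}\bigl(e^{-a_i t}-e^{-b_i t}\bigr)}{1-e^{-t}}.$$
Once the nonnegativity $H(t)\geq 0$ is established, differentiating under the integral produces $(-t)^{k}$ whose signs alternate, so $-[\log F]'$ is completely monotonic. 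Hence $F$ is logarithmically completely monotonic, and Berg's theorem finishes the job.

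The heart of the argument is the exponential inequality
$$\Phi(t):=\sum_{i=1}^{n}e^{-a_i t}-\sum_{i=1}^{n}e^{-b_i t}\geq 0,\qquad t>0.$$
I would first prove its ``ramp'' analogue: for every $c\geq 0$,
$$\sum_{i=1}^{n}(c-a_i)_{+}\;\geq\;\sum_{i=1}^{n}(c-b_i)_{+}.$$
Setting $k_a=|\{i:a_i\leq c\}|$ and $k_b=|\{i:b_i\leq c\}|$, the two sides equal $k_a c-A_{k_a}$ and $k_b c-B_{k_b}$, where $A_k,B_k$ are the partial sums from the hypothesis. In the case $k_a\geq k_b$, the extra entries $a_{k_b+1},\dots,a_{k_a}$ each satisfy $a_i\leq c$, which together with $A_{k_b}\leq B_{k_b}$ yields the inequality. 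In the case $k_a<k_b$, the monotonicity of the $a_i$'s forces $a_{k_a+1},\dots,a_{k_b}$ each to exceed $c$, so $A_{k_b}>A_{k_a}+(k_b-k_a)c$, and $A_{k_b}\leq B_{k_b}$ again closes the argument. To lift this to exponentials, I would use the representation
$$e^{-xt}=\int_{0}^{\infty}(c-x)_{+}\,t^{2}e^{-ct}\,dc$$
of the convex decreasing function $x\mapsto e^{-xt}$ as a positive superposition of ramps, then integrate the ramp inequality against the positive measure $t^{2}e^{-ct}\,dc$ to obtain $\Phi(t)\geq 0$.

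The main obstacle is the ramp inequality in the case $k_a<k_b$: the hypothesis $A_k\leq B_k$ does not imply $a_i\leq b_i$ pointwise, so one cannot compare the sums term by term, and because $a_n$ may be much larger than $b_n$ the usual Karamata majorization (which would need either equal total sums or comparison of partial sums from the top) does not apply directly. The combination of monotonicity of the sequences with the partial-sum bound is what makes the second case work, and this is the delicate point of the proof.
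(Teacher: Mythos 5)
Your proposal is correct. Note first that the paper does not prove this statement at all: it is quoted verbatim as a known theorem of Alzer \cite{Al97}, so there is no in-paper argument to compare against. Your proof is a valid, self-contained reconstruction, and it in fact runs along the same lines as Alzer's original one: the representation $\psi(x+b_i)-\psi(x+a_i)=\int_0^\infty\frac{e^{-(x+a_i)t}-e^{-(x+b_i)t}}{1-e^{-t}}\,dt$ reduces everything to the inequality $\sum_i e^{-a_it}\ge\sum_i e^{-b_it}$, which is a Tomi\'c--Weyl-type majorization inequality for the convex decreasing function $s\mapsto e^{-st}$; your ramp decomposition $e^{-st}=\int_0^\infty (c-s)_+\,t^2e^{-ct}\,dc$ together with the two-case analysis on $k_a$ versus $k_b$ is a clean elementary way to establish it, and both cases check out. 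Two small points worth making explicit in a write-up: (a) the individual integrals $\int_0^\infty e^{-(x+a_i)t}/(1-e^{-t})\,dt$ diverge at $t=0$, so one must keep the differences paired as you do (the kernel $H(t)$ tends to the finite limit $\sum_i(b_i-a_i)\ge0$ as $t\to0^+$ and is bounded at infinity, which also justifies differentiating under the integral sign); (b) you actually prove the stronger statement that the product is \emph{logarithmically} completely monotonic, i.e.\ that $(-1)^k h^{(k+1)}(x)\ge0$ for $h(x)=\sum_i[\log\Gamma(x+b_i)-\log\Gamma(x+a_i)]$ --- which is precisely the form of Alzer's result that the paper invokes later in the proof of Theorem~\ref{thm+comp}, so your argument supplies exactly the ingredient the paper needs.
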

On the other hand, in \cite{LT11}, Lee and Tepedelenlio\v{g}lu
proved the function
$\sqrt[x]{\frac{2\sqrt{\pi}\Gamma(x+1)}{\Gamma(x+1/2)}}$ originating
from the coding gain is logarithmically completely monotonic on
$(0,\infty)$. In addition, Qi and Li \cite{QL13} considered the
logarithmically completely monotonicity of
$\sqrt[x]{\frac{a\Gamma(x+b)}{\Gamma(x+c)}}$. In what follows a
general result for a kind of logarithmically completely monotonic
functions is obtained.

\begin{thm}\label{thm+comp}
Let $0\leq a_1\leq a_2\leq a_3\leq\cdots\leq a_n$, $0\leq b_1\leq
b_2\leq b_3\leq\cdots\leq b_n$ and $\rho>0$, define the function
$$\chi(x)=\sqrt[x]{\rho\prod_{i=1}^n\frac{\Gamma(x+a_i)}{\Gamma(x+b_i)}}.$$
\begin{itemize}
\item [\rm (i)]
If $\rho\prod_{i=1}^n\frac{\Gamma(a_i)}{\Gamma(b_i)}\geq1$ and
$\sum_{i=1}^ka_i\geq \sum_{i=1}^kb_i$ for $k=1,2,\ldots,n$, then the
function $\chi(x)$ is logarithmically completely monotonic on
$(0,\infty)$.
\item [\rm (ii)]
If $\rho\prod_{i=1}^n\frac{\Gamma(a_i)}{\Gamma(b_i)}\leq1$ and
$\sum_{i=1}^ka_i\leq \sum_{i=1}^kb_i$ for $k=1,2,\ldots,n$, then the
reciprocal of the function $\chi(x)$ is logarithmically completely
monotonic on $(0,\infty)$.
\end{itemize}
\end{thm}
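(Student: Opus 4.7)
The approach is to mirror the proof structure of the preceding theorem on $\theta_{a,b,c}^{-1}(x)$. I would set
$$g(x)=\log\rho+\sum_{i=1}^n\log\Gamma(x+a_i)-\sum_{i=1}^n\log\Gamma(x+b_i),$$
so that $\log\chi(x)=g(x)/x$. Applying the identity \eqref{eq+d} to this $g$ yields, for $n\ge 1$ and $x>0$,
$$(-1)^n\bigl(\log\chi(x)\bigr)^{(n)}=\frac{g(0)\,n!}{x^{n+1}}+x^{-n-1}\int_0^x t^n\,(-1)^n g^{(n+1)}(t)\,dt.$$
To establish part (i) it therefore suffices to show that $g(0)\ge 0$ and that $(-1)^n g^{(n+1)}(t)\ge 0$ for all $t>0$ and every $n\ge 1$; the definition of logarithmically completely monotonic function then forces the conclusion.

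The first condition is immediate since $g(0)=\log\bigl(\rho\prod_i\Gamma(a_i)/\Gamma(b_i)\bigr)\ge 0$ by hypothesis. For the derivative condition I would invoke the standard integral representation
$$[\log\Gamma(y)]^{(n+1)}=(-1)^{n+1}\int_0^\infty\frac{s^n\,e^{-ys}}{1-e^{-s}}\,ds\qquad(n\ge 1,\ y>0),$$
already used in the proof of Theorem~\ref{thm+e}, to rewrite
$$(-1)^n g^{(n+1)}(t)=\int_0^\infty\frac{s^n e^{-ts}}{1-e^{-s}}\Bigl(\sum_{i=1}^n e^{-b_i s}-\sum_{i=1}^n e^{-a_i s}\Bigr)\,ds.$$
The whole problem then reduces to the pointwise inequality $\sum_{i=1}^n e^{-a_i s}\le\sum_{i=1}^n e^{-b_i s}$ for every $s>0$.

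This pointwise inequality is the principal obstacle, and it is the same majorization-theoretic step that underlies Alzer's theorem quoted just above. I would invoke the classical Tomi\'c--Weyl (equivalently, Hardy--Littlewood--P\'olya) inequality: for every non-increasing convex function $\phi$ on $[0,\infty)$, the weak supermajorization hypothesis $\sum_{i=1}^k a_i\ge\sum_{i=1}^k b_i$ for $k=1,\dots,n$ (both sequences sorted increasingly) forces $\sum_{i=1}^n\phi(a_i)\le\sum_{i=1}^n\phi(b_i)$. Applied to $\phi(u)=e^{-us}$, which is convex and strictly decreasing for each fixed $s>0$, this gives exactly the required inequality. A self-contained derivation proceeds by Abel summation on $\sum_i\bigl[\phi(b_i)-\phi(a_i)\bigr]$, using $\phi'\le 0$, $\phi''\ge 0$, and the non-negativity of the partial-sum differences $\sum_{i=1}^k(a_i-b_i)$.

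Combining the two sign facts, both terms on the right-hand side of the identity for $(-1)^n(\log\chi(x))^{(n)}$ are non-negative, completing part (i). Part (ii) follows by applying the identical argument to $\log\chi^{-1}(x)=-g(x)/x$: replacing $g$ by $-g$ reverses every inequality, the hypothesis $\rho\prod_i\Gamma(a_i)/\Gamma(b_i)\le 1$ makes the boundary term non-negative, and the reversed hypothesis $\sum_{i=1}^k a_i\le\sum_{i=1}^k b_i$ yields $\sum_i e^{-b_i s}\le\sum_i e^{-a_i s}$ by the same Tomi\'c--Weyl inequality, making the integral term non-negative as well.
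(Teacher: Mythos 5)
Your proof is correct and follows essentially the same route as the paper: the same decomposition $\log\chi(x)=g(x)/x$, the same identity \eqref{eq+d}, and the same reduction to the two sign conditions $g(0)\ge 0$ and $(-1)^k g^{(k+1)}(t)\ge 0$. The only difference is that where the paper simply cites Alzer's result for the second sign condition, you re-derive it from the polygamma integral representation together with the Tomi\'c--Weyl majorization inequality applied to $\phi(u)=e^{-us}$, which makes the argument self-contained but is not a different method.
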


\begin{proof} Because (ii) can be obtained in the similar
way, it only needs to prove (i). Define the function
$h(x)=\sum_{i=1}^n\log \Gamma(x+a_i)-\log\Gamma(x+b_i)$. Then
$$\log\sqrt[x]{\rho\prod_{i=1}^n\frac{\Gamma(x+a_i)}{\Gamma(x+b_i)}}=\frac{\log\rho\prod_{i=1}^n\frac{\Gamma(x+a_i)}{\Gamma(x+b_i)}}{x}
=\frac{\log\rho+h(x)}{x}.$$ So it is not hard to get
\begin{eqnarray}\label{muti}
&&(-1)^k[\log\chi(x)]^{(k)}=\frac{k!(\log\rho+h(0))}{x^{k+1}}+x^{-k-1}\int_0^x
t^k (-1)^kh^{(k+1)}(x)dt.
\end{eqnarray}

If $\rho\prod_{i=1}^n\frac{\Gamma(a_i)}{\Gamma(b_i)}\geq1$, then it
is clear that $$\log\rho+h(0)\geq0.$$ In addition, Alzer \cite{Al97}
proved that $(-1)^kh^{(k+1)}(x)\geq0$ for $k\geq0$ and $x\geq0$.
Thus, $$(-1)^k[\log\chi(x)]^{(k)}\geq0,$$ that is, $\chi(x)$ is
logarithmically completely monotonic on $(0,\infty)$. This completes
the proof.
\end{proof}
\begin{rem}
If $\rho=2\sqrt{\pi}$, $a_1=1$ and $b_1=\frac{1}{2}$, then
$\frac{2\sqrt{\pi}\Gamma(1)}{\Gamma(1/2)}=2>1$. So the function
$\sqrt[x]{\frac{2\sqrt{\pi}\Gamma(x+1)}{\Gamma(x+1/2)}}$ is
logarithmically completely monotonic on $(0,\infty)$, see
\cite{LT11}. In addition, if $n=1$ in Theorem~\ref{thm+comp}, then
it was proved by Qi and Li \cite{QL13}. Thus, the result in
Theorem~\ref{thm+comp} is a generalization.
\end{rem}
\section{Acknowledgements}
The author would like to thank the anonymous reviewer for many
valuable remarks and suggestions to improve the original manuscript.
He also wishes to thank his advisor Prof. Yi Wang at Dalian
University of Technology for his advice, support, and constant
encouragement during the course of his research.


\small

\end{document}